\newcommand{\sref}[2]{\hyperref[#2]{#1 \autoref*{#2}}}
\theoremstyle{plain}
\newtheorem{theorem}{Theorem}[section]
\newtheorem{lemma}{Lemma}[section]
\theoremstyle{definition}
\newtheorem{corollary}{Corollary}[section]
\newtheorem{construction}{Construction}[section]
\newtheorem{remark}{Remark}[section]
\newcommand{\B}{\mathcal {B}}
\newcommand{\C}{\mathcal {C}}
\newcommand{\bbZ}{{\mathbb Z}}
\newcommand{\bbF}{{\mathbb F}}
\begin{document}

\title{Improved Bounds for Codes over Trees}
\author{
Yanzhi~Li, \thanks{Y. Li ({\tt davidlee@mail.ustc.edu.cn}) and W. Zhong({\tt zhongwj@mail.ustc.edu.cn}) are with the School of Mathematical Sciences, University of Science and Technology of China, Hefei, 230026, Anhui, China.}
\and Wenjie~Zhong,
\and Tingting~Chen\thanks{T. Chen ({\tt ttchenxu@mail.ustc.edu.cn}) is with the Institute of Mathematics and Interdisciplinary Sciences, Xidian University, Xi'an 710071, China.}
\and and~Xiande~Zhang\thanks{X. Zhang ({\tt drzhangx@ustc.edu.cn})  is with the School of Mathematical Sciences, University of Science and Technology of China, Hefei, 230026, and with Hefei National Laboratory, University of Science and Technology of China, Hefei 230088, China. (Corresponding author)}}

\maketitle

\begin{abstract}
Codes over trees were introduced recently to bridge graph theory and coding theory with diverse applications in computer science and beyond. A central challenge lies in determining the maximum number of labelled trees over $n$ nodes with pairwise distance at least $d$, denoted by $A(n,d)$, where the distance between any two labelled trees is the minimum number of edit edge operations in order to transform one tree to another.   By various tools from graph theory and algebra, we show that when $n$ is large, $A(n,d)=O((Cn)^{n-d})$ for any $d\leq n-2$, and $A(n,d)=\Omega((cn)^{n-d})$ for any $d$ linear with $n$, where constants $c\in(0,1)$ and $C\in [1/2,1)$  depending on $d$. Previously, only $A(n,d)=O(n^{n-d-1})$ for fixed $d$ and $A(n,d)=\Omega(n^{n-2d})$ for $d\leq n/2$ were known, while the upper bound is improved for any $d$ and the lower bound is improved for $d\geq 2\sqrt{n}$. Further, for any fixed integer $k$, we prove the existence of codes of size $\Omega(n^k)$ when $n-d=o(n)$, and give explicit constructions of codes which show $A(n,n-4)=\Omega(n^2)$ and $A(n,n-13)=\Omega(n^3)$.

\end{abstract}

\begin{IEEEkeywords}
\boldmath Trees,  Forests, Codes, Tree-distance.
\end{IEEEkeywords}

\section{Introduction}
Trees are fundamental structures in both theoretical and applied fields, offering efficient solutions for complex problems. In computer science, trees are used in data structures like binary search trees, AVL trees, and B-trees for fast data storage, retrieval, and indexing \cite{knuth1997art}. Decision trees are widely employed in artificial intelligence for classification and regression tasks \cite{breiman2017classification}.
In networking, tree structures optimize routing protocols and multicast communication, ensuring efficient data transmission across networks \cite{cormen2022introduction}, while in natural language processing (NLP), parse trees are essential for syntactic analysis \cite{jurafsky2009speech}. Furthermore, in cryptography, trees are applied to construct Merkle trees for data integrity and verification in blockchain technologies \cite{Mingli_Wu2019}.

In 2021, Yohananov and Yaakobi \cite{Yohananov2021} introduced the coding scheme over labelled trees, called \emph{codes over trees}, to correct  edge erasures. Here, the {\it tree-distance} between any two labelled trees is the minimum number of edit edge operations in order to transform one
tree to another.  There are several applications in which such codes can be used. For example, trees are widely used in computer data structures and are foundational in implementing various practical standard libraries \cite{Wiki2023,Microsoft2024,josuttis2012c++}, such as maps, sets, and other advanced data structures that optimize storage systems--one example being the Log-Structured Merge (LSM) tree \cite{o1996log,agrawal2008design,min2012sfs}. These tree-based data structures are typically organized hierarchically \cite{hopcroft1983data}, with each node storing a list of pointers to other nodes within the tree. However, in real-world systems, the unpredictability of cache storage and memory can lead to erroneous addresses \cite{ford2010availability,rashmi2013solution}, causing pointers to reference incorrect locations and thereby compromising the integrity and reliability of the data structure. To mitigate this issue, tree-based coding schemes can introduce redundant edges and nodes, which help to correct these unexpected pointer mismatches and ensure the system's reliability.

Motivated by these applications, it is worthwhile to explore  theoretical results about codes over trees under the tree-distance metric. The central problem regarding such codes is to 
determine the maximum cardinality of a code with a given vertex-set and minimum distance.
Denote $A(n,d)$ the maximum size of  codes over trees with $n$ nodes and tree-distance $d$.
By Cayley's formula \cite{cayley1889theorem}, it is known that $A(n,1)=n^{n-2}$. In \cite{Yohananov2021}, Yohananov and Yaakobi showed that $A(n,n-1)=\lfloor\frac{n}{2}\rfloor$ and $A(n,n-2)=n$,  and constructed optimal  codes by using Hamiltonian paths and spanning stars. For fixed distance $d$, they provided an upper bound $A(n,d)=O(n^{n-d-1})$;   for $d\leq \frac{n}{2}$, they gave a lower bound $A(n,d)=\Omega(n^{n-2d})$ by leveraging linear codes with Hamming distance; and they constructed codes of size $\Omega(n^2)$ when $d$ exceeds $\frac{n}{2}$.

In this paper, we improve the upper and lower bounds for $A(n,d)$ significantly and provide several explicit constructions when $n-d$ is some constant. Our main contributions are listed below.

\begin{enumerate}
    \item We present two new general upper bounds on $A(n,d)$, which combining extends the bound $A(n,d)=O(n^{n-d-1})$ from fixed $d$ to any $d\leq n-2$.
In fact our results show a stronger upper bound $A(n,d)=O((Cn)^{n-d})$ for some constant $C\in [1/2,1)$ depending on $d$ when $n$ is large. In particular, $A(n,d)\le 2\left(\frac{n}{2}\right)^{n-d-1}+O(n^{n-d-2})$ if $n-d$ is a constant.

    \item We give several general lower bounds
    which improve $A(n,d)= \Omega(n^{n-2d})$ asymptotically when $ d \geq 2\sqrt{n}$. Especially when $d=\delta n$ with $\delta\in (0,1)$, $A(n,d)=\Omega(cn)^{n-d}$ for constant $c\in (0,1)$ depending on $\delta$. Further, for any fixed integer $k\geq 1$, $A(n,d)= \Omega(n^k)$ when $d=n-\frac{(k+1)\ln n}{\ln\ln n}$.

         \item Four explicit constructions of codes over trees are presented, which imply $\Omega(n^2)=A(n,n-4)=O(n^3)$, $\Omega(n^3)=A(n,n-13)=O(n^{12})$, $ A(8,5)\geq 28$ and $A(11,8)\geq 35$. The first two constructions are by algebraic method,  the third one is by design theory, and the last two lower bounds support the guess that $A(n,n-3)=\frac{1}{2}n^2+\Theta(n)$.

\end{enumerate}

Our paper is organized as follows. In \autoref{pre}, we introduce necessary notations and preliminary results  of codes over trees. In \autoref{upperbound}, we show two upper bounds for $A(n,d)$ for general positive integer $d$. 
\autoref{con} gives  general lower bounds and four explicit constructions of codes.  Finally,
a brief conclusion is given in \autoref{conclusion}.

\section{Preliminaries}\label{pre}

Let $G=(V,E)$ be a simple graph with labelled vertices, where $V=[n]:=\{1,\ldots,n\}$  unless otherwise specified. A {\it spanning tree}  of $G$ is a connected acyclic subgraph of $G$ with $n-1$ edges. Two classes of  special spanning trees are  Hamiltonian paths and stars who have a vertex of degree $n-1$.
 The set of all spanning trees of the complete graph  $K_n$ over $[n]$ is denoted by $\mathcal{T}(n)$. A subset of $\mathcal{T}(n)$ is called a \emph{tree-code} with $n$ nodes,
and each tree in the code is called a \emph{tree-codeword}. The \emph{tree-distance} between two trees $T_1=([n], E_1)$ and $T_2=([n], E_2)$, denoted by $d(T_1,T_2)$, is defined to be
\[d(T_1,T_2)=n-1-|E_1\cap E_2|.\] It is known that the tree-distance is a metric \cite{Yohananov2021}. For a tree-code $\mathcal{C}\subseteq \mathcal{T}(n)$ with $n$ nodes, the tree-distance of $\mathcal{C}$, denoted by $d(\mathcal{C})$, is the minimum tree-distance between any two distinct tree-codewords in $\mathcal{C}$, that is, \[d(\mathcal{C})=\min_{T_1\neq T_2,T_1,T_2\in\mathcal{C}}\{d(T_1,T_2)\}.\]
  A tree-code $\mathcal{C}\subseteq \mathcal{T}(n)$ of tree-distance $d$ is called an $(n,|\mathcal{C}|,d)$ tree-code, which is able to correct any $d-1$ edge erasures, or any $\lfloor (d-1)/2\rfloor$ edge errors \cite{Yohananov2021}. For simplicity, we denote $\mathcal{C}$ as an $(n,d)$ tree-code if the size of the code is uncertain.

The largest size of an $(n,d)$ tree-code is denoted by $A(n,d)$. The fundamental problem is to determine the values of $A(n,d)$. 
Since $|\mathcal{T}(n)|=n^{n-2}$ by  Cayley's formula \cite{cayley1889theorem}, it is trivial that $A(n,1)=n^{n-2}$.
In \cite{Yohananov2021}, the authors determined  $A(n,n-1)=\lfloor\frac{n}{2}\rfloor$, which is achieved by a code consisting of $\lfloor\frac{n}{2}\rfloor$ edge-disjoint Hamiltonian paths \cite{lee2020decomposing};  and $A(n,n-2)=n$, which is achieved by all the $n$ different spanning stars. Further, noting that any path and any star share at most two edges, the collection of $\lfloor\frac{n}{2}\rfloor$ disjoint Hamiltonian paths and $n$ stars is an $(n, n-3)$ tree-code, which yields $A(n,n-3)\geq n+\lfloor\frac{n}{2}\rfloor$.

For general $d$, it is quite difficult to determine the exact values of $A(n,d)$. We summarize the known results obtained in \cite{Yohananov2021} as follows.

\begin{theorem}[\cite{Yohananov2021}]\label{thm_6_pre}
The followings hold.
\begin{itemize}
  \item[(1)] For all $n\geq 9$, $A(n,n-3)\leq n^2$.
  \item[(2)] For sufficiently large $n$ and fixed $d$, $A(n,d)=O(n^{n-d-1})$.
  \item[(3)] For any positive integer $d\leq n/2$, $A(n,d)=\Omega(n^{n-2d})$.
  \item[(4)] For fixed $m$ and prime $n$, there exists an $(n,\frac{n-1}{2}\cdot \lfloor \frac{n-1}{m}\rfloor,\lfloor \frac{3n}{4}\rfloor-\lceil \frac{3n}{2m}\rceil-2)$ tree-code.
\end{itemize}

\end{theorem}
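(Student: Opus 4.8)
Since this theorem restates the four results of \cite{Yohananov2021}, a self-contained argument would establish each item separately, and I expect parts (1)--(2) (the upper bounds) to carry the weight while parts (3)--(4) are explicit constructions. The unifying observation for the upper bounds is that a tree-code $\mathcal{C}$ of distance $d$ is precisely a family of spanning trees of $K_n$ whose edge sets pairwise intersect in at most $n-1-d$ edges; so both bounds are really constant-intersection (Johnson/Fisher-type) problems, and the whole game is to extract extra savings from acyclicity and connectivity.

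For part (2), with $d$ fixed and $n$ large, codewords pairwise share almost all of their $n-1$ edges, so I would run a contraction/puncturing recursion. I would choose a pivot edge $e$ lying in as many codewords as possible --- since $\sum_e n_e=|\mathcal{C}|(n-1)$, some edge lies in at least $|\mathcal{C}|(n-1)/\binom n2\approx 2|\mathcal{C}|/n$ codewords --- and observe that contracting $e$ sends the codewords through $e$ injectively to spanning trees of $K_{n-1}$ whose pairwise intersections drop by exactly one, i.e.\ to an $(n-1,d)$ (or $(n-1,d-1)$) tree-code; bounding that by $A(n-1,\cdot)$ and iterating, with the base values $A(n,1)=n^{n-2}$, $A(n,n-2)=n$, $A(n,n-1)=\lfloor n/2\rfloor$, should collapse to $O(n^{n-d-1})$. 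In parallel I would try a direct double count: a common sub-forest $F$ with $n-1-d$ edges forces all codewords containing $F$ to be edge-disjoint after contracting $F$, and $K_{d+1}$ admits at most $\lfloor(d+1)/2\rfloor$ edge-disjoint spanning trees (Nash--Williams), so counting incidences (codeword, such $F$) against the number of $(n-1-d)$-edge forests of $K_n$ gives a bound of the same order. For part (1), $d=n-3$ means pairwise intersection at most $2$; the crude Johnson estimate $\binom{\binom n2}{3}/\binom{n-1}{3}$ only yields $O(n^3)$, so I would sharpen by charging each codeword to the paths of length two and disjoint edge-pairs it contains together with one of its $\ge 2$ leaves, and use acyclicity to argue that a given such configuration-plus-leaf lies in only boundedly many codewords, squeezing the exponent from $3$ to $2$ (the hypothesis $n\ge 9$ absorbing small cases).

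For part (3) the plan is a Hamming-metric embedding: fix a root, say vertex $n$, and identify each tree with its parent vector $p\colon[n-1]\to[n]$; two trees whose parent vectors differ on a set $D$ of coordinates share at least $(n-1)-|D|$ edges, and conversely --- up to a universal constant coming from coincidences $\{v,p_1(v)\}=\{w,p_2(w)\}$ --- a large Hamming distance between the vectors forces a large tree-distance. Restricting to parent vectors that always give a tree and form a $q$-ary linear code with $q=\Theta(n)$, length $\Theta(n)$ and Hamming distance $\Theta(d)$ (e.g.\ a suitably shortened Reed--Solomon code) then yields $\Omega(n^{n-2d})$ spanning trees of pairwise tree-distance $\ge d$, valid while $n-2d\ge 0$. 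For part (4) and prime $n$, I would start from the Walecki-type decomposition of $K_n$ into $(n-1)/2$ edge-disjoint Hamiltonian paths (all translates in $\mathbb{Z}_n$ of one base path), refine each path into $\lfloor(n-1)/m\rfloor$ tree-codewords by an $m$-block cut-and-restitch that preserves spanningness, and bound the maximal edge overlap of any two resulting trees --- two from different translates share essentially no path edges, two from the same translate are kept apart by the restitching --- producing the distance $\lfloor 3n/4\rfloor-\lceil 3n/(2m)\rceil-2$.

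The step I expect to be the real obstacle is the upper bound (2): making the contraction recursion honest requires an adaptive choice of pivot (a most-popular edge, since a codeword need not contain any prescribed edge) and careful tracking of how the distance parameter behaves under contraction, so that the induction terminates at the exponent $n-d-1$ rather than something weaker; the double-counting alternative instead hinges on a sufficiently sharp count of the number of $(n-1-d)$-edge forests of $K_n$, which is where most of the effort would go.
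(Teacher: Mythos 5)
This theorem is a citation of \cite{Yohananov2021}, not something the present paper reproves; the surrounding discussion records the cited methods. Part (2) follows from the sphere-packing bound $A(n,d)\le |\mathcal{F}(n,d)|/\binom{n-1}{d-1}$, obtained by counting $(n-d)$-edge forests: each lies in at most \emph{one} codeword, since two codewords share at most $n-1-d<n-d$ edges. Part (1) uses an auxiliary bipartite graph and Reiman's inequality, part (3) embeds each tree as its binary incidence vector of length $\binom n2$ (where tree-distance is exactly half the Hamming distance) and averages over cosets of a linear code, and part (4) uses a two-star-tree construction. Your proposals diverge from all four, and two of them contain concrete errors rather than mere differences of route.

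For part (2), the double count over $(n-1-d)$-edge forests with a Nash--Williams cap of $\lfloor(d+1)/2\rfloor$ is unsound. After contracting such a forest $F$, the quotient of $K_n$ is a \emph{multigraph} on $d+1$ super-vertices with $\Theta(n^2)$ parallel edges, so the number of pairwise edge-disjoint spanning trees through $F$ is not bounded by $\lfloor(d+1)/2\rfloor$; it can be $\Theta(n^2)$. As a sanity check, if your bound held it would give $A(n,d)=O(n^{n-d-2})$ for fixed $d$, contradicting $A(n,1)=n^{n-2}$. The sphere-packing count deliberately uses one more edge in the forest so that ``at most one codeword per forest'' is immediate, which is exactly what produces the exponent $n-d-1$. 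Your contraction recursion runs into the same multigraph subtlety: $K_n/e$ is not $K_{n-1}$, so one cannot directly invoke $A(n-1,\cdot)$ for the contracted code. For part (3), the parent-vector embedding has the inequality pointing the wrong way. If $p_1,p_2$ agree off a set $D$ of coordinates then $|E_1\cap E_2|\ge n-1-|D|$, i.e.\ tree-distance $\le$ Hamming distance; a large Hamming distance therefore gives no lower bound on tree-distance. The ``coincidences'' $\{v,p_1(v)\}=\{w,p_2(w)\}$ with $v\ne w$ are not $O(1)$: rooting a Hamiltonian path at each of its two endpoints gives the same unrooted tree (tree-distance $0$) with parent vectors differing in every coordinate, so the slack can be $\Theta(n)$. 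The binary incidence-vector encoding used in \cite{Yohananov2021} avoids this entirely because there the two metrics agree up to a factor of two with no error term.
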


The authors in \cite{Yohananov2021} expressed that although the general lower bound is $\Omega(n^{n-2d})$ for all $d\leq n/2$ in \autoref{thm_6_pre} (3), it is possible to construct a code of size $\Omega(n^2)$ with tree-distance $d$ approaching $ \frac{3n}{4}$ when $n$ is a prime number by \autoref{thm_6_pre} (4). It is interesting to find the range of $d$ for which $A(n,d)=\Theta(n^2)$.

The lower bound in \autoref{thm_6_pre} (3) is obtained by naturally viewing each tree with $n$ nodes as a binary vector of length $\binom{n}{2}$, indexed by all possible $\binom{n}{2}$ edges.
Then the tree-distance between two trees is half of the Hamming distance between the corresponding  binary vectors.  Hence, an $(n,d)$ tree-code is a subset of a binary code of length $\binom{n}{2}$ of Hamming distance $2d-1$. Then applying the pigeonhole principle to all cosets of such a linear code, they gave the lower bound in \autoref{thm_6_pre} (3). However, this lower bound is not good especially when the tree-distance $d$ is large. 

For the upper bounds, the authors in \cite{Yohananov2021} applied the sphere packing bound by considering the forest ball of a tree, that is, the set of all forests obtained by deleting a fixed number of edges from a tree. 
Note that deleting $d-1$ edges from a tree results in a forest with $d$ components (including isolated vertices). Denote by  $\mathcal{F}(n, d)$ the set of all forests over $n$ nodes with exactly $d$ components. The sphere packing bound in \cite{Yohananov2021} is
\begin{equation}\label{eq:sp}
  A(n,d)\leq |\mathcal{F}(n, d)|/ \binom{n-1}{d-1}.
\end{equation}
Combining with the value of $|\mathcal{F}(n, d)|$ in \cite{bollobas2012graph},
the authors obtained \autoref{thm_6_pre} (2) in \cite{Yohananov2021}.

The upper bound in \autoref{thm_6_pre} (1) was obtained by constructing an auxiliary bipartite graph with one part corresponding to the tree-code or its sub-code, and another part corresponding to the set of edges. This auxiliary graph will be of girth at least six. Then the authors applies Reiman’s inequality \cite{neuwirth2001size,reiman1958problem} to derive an upper bound on the code size. This idea will be refined to get a stronger upper bound in the next section.

\section{Upper bounds on the size of tree-codes}\label{upperbound}

In this section, we present two new general upper bounds on the size of tree-codes.
In \autoref{thm_6_pre} (2), we know that for any fixed  $d$, $A(n,d)=O(n^{n-d-1})$. Our first upper bound extends this result to any tree-distance $d\leq 0.99n$. 
Our second upper bound further extends this result to any $d\le n-2$,
which in fact shows a much stronger bound $A(n,d)=O((Cn)^{n-d})$ for some constant $C\in[1/2,1)$ depending on $d$ when $n$ is large. 
With a refinement of the second method, we give explicit new upper bounds for $d\in \{n-3,n-4, n-5\}$.

%

\subsection{The first upper bound}
The idea is to apply the sphere packing bound in Eq. (\ref{eq:sp})  with a more
 precise estimate of $|\mathcal{F}(n,d)|$ by double counting.

\begin{theorem}[\textbf{The first upper bound}]\label{firstb}
    For all $1\leq d\leq n-2$, $A(n,d)\le \frac{n^{n-d}}{n-d+1}$. In particular,  when $d=o(n)$ or $d= \delta n$ for some constant $\delta\in (0,1)$, we have $A(n,d)=O(n^{n-d-1})$.
\end{theorem}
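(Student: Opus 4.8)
The plan is to use the sphere packing bound from Eq.~(\ref{eq:sp}), namely $A(n,d)\le |\mathcal{F}(n,d)|/\binom{n-1}{d-1}$, but to avoid quoting the exact formula for $|\mathcal{F}(n,d)|$ from \cite{bollobas2012graph} and instead obtain a clean upper bound on $|\mathcal{F}(n,d)|$ by a double-counting (or direct enumeration) argument that is sharp enough to give the factor $\frac{1}{n-d+1}$. The key quantity is $|\mathcal{F}(n,d)|$, the number of labelled forests on $[n]$ with exactly $d$ components. Writing $m=n-d$ for the number of edges of such a forest, I want to show $|\mathcal{F}(n,d)|\le \binom{n-1}{d-1}\cdot\frac{n^{n-d}}{n-d+1}$, after which the bound follows immediately from Eq.~(\ref{eq:sp}).

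First I would count forests with a chosen number of edges by relating them to trees on a slightly larger vertex set, or directly. One convenient route: a labelled forest on $[n]$ with $d$ components and $m=n-d$ edges can be encoded by first choosing which $d$ vertices are the "roots" of the components together with how the remaining $n-d$ non-root vertices attach — but cleaner is the classical bound that the number of labelled forests on $[n]$ with $k$ edges is at most $\binom{n}{k}\cdot n^{k}\big/(k+1)$ or a similar expression; I would derive exactly the form needed. Concretely, I plan to use the generalized Cayley formula: the number of labelled forests on $[n]$ consisting of $d$ rooted trees with specified component sizes $n_1,\dots,n_d$ is $\frac{n!}{n_1!\cdots n_d!}\prod n_i^{n_i-1}$, and summing over all size vectors and dividing out the choice of roots, one gets $|\mathcal{F}(n,d)|$. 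The target inequality then reduces to bounding $\sum \binom{n}{n_1,\dots,n_d}\prod n_i^{n_i-1}$ against $n^{n-1}$ (trees on $[n]$) times a combinatorial factor. In fact, the number of forests on $[n]$ with $d$ components equals the number of spanning trees of $K_{n+1}$ that use a fixed vertex $n+1$ of degree exactly $d$, divided by $\binom{n}{d-1}$-type bookkeeping; I would make this correspondence precise and read off $|\mathcal{F}(n,d)|=\binom{n-1}{d-1}\frac{n^{n-d}}{?}$ up to the desired inequality.

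The second part of the statement — that $A(n,d)=O(n^{n-d-1})$ when $d=o(n)$ or $d=\delta n$ — is then just reading off the asymptotics of $\frac{n^{n-d}}{n-d+1}$: if $n-d\to\infty$ at rate $\Theta(n)$ (the case $d=\delta n$) or $n-d=n-o(n)$ (the case $d=o(n)$), then $n-d+1=\Theta(n)$, so $\frac{n^{n-d}}{n-d+1}=\Theta(n^{n-d-1})=O(n^{n-d-1})$. This is immediate once the main inequality is in hand and requires no real work beyond noting $n-d+1\ge cn$ in both regimes.

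The main obstacle will be getting the enumeration of $|\mathcal{F}(n,d)|$ tight enough: the crude bound $|\mathcal{F}(n,d)|\le \binom{\binom{n}{2}}{n-d}$ or even $|\mathcal{F}(n,d)|\le n^{n-d}\binom{n-1}{d-1}$ is too lossy to produce the denominator $n-d+1$, so I need the extra saving of a factor $\Theta(n-d)$. I expect this saving to come precisely from the symmetry among the $n-d+1$ "positions" a root/component structure can occupy — essentially the same phenomenon that makes $|\mathcal{T}(n)|=n^{n-2}$ rather than $n^{n-1}$ — so the heart of the argument is identifying the right over-counting factor in a natural bijective or double-counting encoding of forests by sequences in $[n]^{\,n-d}$. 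Once that factor of $n-d+1$ is isolated, dividing by $\binom{n-1}{d-1}$ in Eq.~(\ref{eq:sp}) cancels the multinomial bookkeeping and leaves exactly $\frac{n^{n-d}}{n-d+1}$.
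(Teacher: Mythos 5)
Your plan is essentially the same as the paper's: apply the sphere packing bound of Eq.~(\ref{eq:sp}) and bound $|\mathcal{F}(n,d)|$ by double counting over forests with a distinguished root (or distinguished vertex) in each component, using a generalized Cayley formula and the observation that the over-counting factor $\prod_i q_i$ (where $q_i$ are the component sizes) is at least $n-d+1$. The paper's concrete realization is to fix a $d$-set $X$, invoke Tak\'acs's count $|\mathcal{F}_{n,d}(X)|=dn^{n-1-d}$ for forests in which each element of $X$ lies in its own component, and double count pairs $(F,X)$; the step you gesture at but leave unpinned --- that each $F$ admits at least $n-d+1$ valid $X$, with the minimum attained when $d-1$ components are isolated vertices --- is exactly the factor the paper isolates.
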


\begin{proof}
Given a set $X\subseteq [n]$ with $d$ vertices, let $\mathcal{F}_{n,d}(X)\subseteq \mathcal{F}(n,d)$ be the set of all forests on $n$ vertices with $d$ components such that each vertex in $X$ is located in a different component. By  a generalized Cayley's formula in \cite{takacs1990cayley}, we know that
$|\mathcal{F}_{n,d}(X)|= dn^{n-1-d}:=F_{n,d}$ for any $X$ of size $d$ with $1\le d< n$. Define a set
\[I=\{(F,X):F\in \mathcal{F}(n,d),X\subseteq [n] \text{ with } |X|=d \text{ such that } F\in \mathcal{F}_{n,d}(X) \}.\]
We will compute the size of $I$ in two different ways.

First, there are $\binom{n}{d}$ ways to choose $X\subseteq [n]$ with $|X|=d$. For each such $X$, there are $|\mathcal{F}_{n,d}(X)|={F}_{n,d}$ forests $F\in \mathcal{F}(n,d)$ such that $F\in \mathcal{F}_{n,d}(X)$. Hence,
\[|I|=\binom{n}{d}\cdot {F}_{n,d}.\]

	On the other hand, there are $|\mathcal{F}(n,d)|$ ways to choose a forest $F\in \mathcal{F}(n,d)$.
	For each choice of $F$, there are at least $n-d+1$ ways to choose $X$ such that $F\in \mathcal{F}_{n,d}(X)$. In the extreme case,  one of the components of $F$ contains $n-d+1$ vertices, while each of the remaining components consists of exactly one isolated vertex. Hence,
	\[|I|\geq |\mathcal{F}(n,d)|\cdot (n-d+1).\]
	
Combining the above analysis, we see that
$|\mathcal{F}(n,d)|\cdot (n-d+1)\leq |I|=\binom{n}{d}\cdot F_{n,d}.$
Thus  \[|\mathcal{F}(n,d)|\le \frac{\binom{n}{d}\cdot {F}_{n,d}}{n-d+1} =\frac{\binom{n}{d}\cdot dn^{n-1-d}}{n-d+1}.\] By applying the sphere packing bound in Eq. (\ref{eq:sp}), we have

\[A(n,d)\le \frac{|\mathcal{F}(n,d)|}{\binom{n-1}{d-1}}\le \frac{\binom{n}{d}\cdot dn^{n-1-d}}{\binom{n-1}{d-1}(n-d+1)}= \frac{n^{n-d}}{n-d+1}.\]
\end{proof}

Note that when $n-d$ is sub-linear with $n$, say $d= n-n^\alpha$ for any $\alpha\in (0,1)$, we have $A(n,d)\le n^{n-d-\alpha}$ by \autoref{firstb},
which is weaker than $O(n^{n-d-1})$. Our second upper bound will give a much stronger bound for a wider range of $d$, which could imply a bound $O(n^{n-d-1})$ even when $n-d$ is a constant.

\subsection{The second upper bound}\label{secUB}
The idea is motivated by the proof techniques for Singleton bound in classical coding theory, which has been used in \cite{Yohananov2021} to obtain \autoref{thm_6_pre} (1). For convenience, we define the following notations.

For $1\leq  d \le n-2$ and $0\leq i\leq n-d-2$, an $(n,d;i)$ tree-code is an $(n,d)$ tree-code whose all tree-codewords intersect on a set of at least  $i$ common edges. Let $A(n,d;i)$ be the maximum size of all $(n,d;i)$ tree-codes. It is clear that $A(n,d;0)=A(n,d)$ and $A(n,d;i)\leq A(n,d)$ for $i\geq 1$. The following relation between $A(n,d;i)$ and $A(n,d)$ is obtained by the pigeonhole principle.

\begin{lemma}\label{m_nd2g}
For $i\geq 1$,	$A(n,d;i)\ge A(n,d;i-1)(n-i)/\binom{n}{2}$. Consequently,
\[A(n,d;n-d-2)\ge \frac{(d+2)(d+3)\cdots(n-1)}{\binom{n}{2}^{n-d-2}}A(n,d).\]
\end{lemma}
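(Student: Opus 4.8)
The plan is to prove the recursive inequality by a short double-counting/pigeonhole argument, and then obtain the displayed consequence by iterating it.

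For the recursion, I would start from a maximum $(n,d;i-1)$ tree-code $\mathcal{C}$, so that $|\mathcal{C}|=A(n,d;i-1)$ and the codewords share a common edge set $S$ with $|S|\ge i-1$. If already $|S|\ge i$, then $\mathcal{C}$ itself is an $(n,d;i)$ tree-code and the claim is immediate since $(n-i)/\binom{n}{2}\le 1$; so I may assume that $S$ has been fixed to have size exactly $i-1$. Every tree-codeword $T\in\mathcal{C}$ has exactly $n-1$ edges, of which $i-1$ lie in $S$, leaving exactly $n-i$ edges in $E(T)\setminus S$. Double counting the set of pairs $\{(T,e):T\in\mathcal{C},\ e\in E(T)\setminus S\}$ then gives
\[
\sum_{e\notin S}\bigl|\{T\in\mathcal{C}: e\in E(T)\}\bigr|=\sum_{T\in\mathcal{C}}|E(T)\setminus S|=(n-i)\,|\mathcal{C}|.
\]
Since there are fewer than $\binom{n}{2}$ choices of an edge $e\notin S$, the pigeonhole principle yields an edge $e^{*}\notin S$ contained in at least $(n-i)|\mathcal{C}|/\binom{n}{2}$ of the tree-codewords. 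The subcode $\mathcal{C}'=\{T\in\mathcal{C}: e^{*}\in E(T)\}$ is again an $(n,d)$ tree-code, because it is a subset of a tree-code of tree-distance at least $d$; all of its codewords share the $i$ edges in $S\cup\{e^{*}\}$; and $|\mathcal{C}'|\ge (n-i)|\mathcal{C}|/\binom{n}{2}$. Hence $A(n,d;i)\ge|\mathcal{C}'|\ge A(n,d;i-1)(n-i)/\binom{n}{2}$, which is the first assertion.

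For the consequence, I would apply this inequality successively for $i=1,2,\dots,n-d-2$, starting from $A(n,d;0)=A(n,d)$; all the factors $n-i\ge d+2$ are positive, so the chain telescopes to
\[
A(n,d;n-d-2)\ge A(n,d)\prod_{i=1}^{n-d-2}\frac{n-i}{\binom{n}{2}}=\frac{(d+2)(d+3)\cdots(n-1)}{\binom{n}{2}^{\,n-d-2}}\,A(n,d),
\]
using $\prod_{i=1}^{n-d-2}(n-i)=(n-1)(n-2)\cdots(d+2)$.

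There is no serious obstacle here; the only points needing a little care are the bookkeeping that each codeword has exactly $n-1-(i-1)=n-i$ edges outside the common set $S$ (which is what produces the factor $n-i$ rather than $n-1$), the observation that passing to a subcode never decreases the tree-distance, and the harmless case where the common intersection is already larger than $i-1$.
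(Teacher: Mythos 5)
Your proof is correct and takes essentially the same route as the paper: a pigeonhole argument on the edges outside a fixed $(i-1)$-set of common edges, followed by telescoping the recursion from $A(n,d;0)=A(n,d)$. The only cosmetic differences are that you spell out the (harmless) case in which the common intersection already has size at least $i$, and you bound the number of available edges by $\binom{n}{2}$ directly rather than via $\binom{n}{2}-(i-1)$ as the paper does before relaxing to the same bound.
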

\begin{proof}
  Let $\mathcal{D}\subset T(n)$ be an $(n,d;i-1)$ tree-code of size $M=A(n,d;i-1)$. Suppose that all tree-codewords of $\mathcal{D}$ intersect on a set $E$ of  $i-1$ common edges. Then there are $n-i$ edges besides $E$ in each tree-codeword of $\mathcal{D}$.  By the pigeonhole principle, among all $\binom{n}{2}-(i-1)$ edges in $K_n$ excluding $E$, there exists an edge contained in at least $M(n-i)/(\binom{n}{2}-i+1)\geq M(n-i)/\binom{n}{2}$ trees of $\mathcal{D}$. Collecting all such trees, we get an $(n,d;i)$ tree-code, which completes the proof.
\end{proof}

To give an upper bound of $A(n,d;n-d-2)$, we construct an auxiliary bipartite graph as follows.  Let $\mathcal{D}\subset T(n)$ be an $(n,d;n-d-2)$ tree-code of size $A(n,d;n-d-2)$, whose all tree-codewords intersect on a set $E$ of  $n-d-2$ common edges. Let $\bar{E}$ be the set of all edges in $K_n$ excluding $E$. %
Then every tree-codeword in $\mathcal{D}$ contains exactly $d+1$ edges in $\bar{E}$, and every two tree-codewords in $\mathcal{D}$ share at most one common edge in $\bar{E}$. The desired bipartite graph has two parts $\mathcal{D}$ and $\bar{E}$, where an edge $e\in\bar{E}$ is connected to a tree $T\in \mathcal{D}$ if and only if $e$ is an edge of $T$. Denote this graph by $G\triangleq(\mathcal{D}\cup \bar{E}, \mathcal{E})$. Then it is easy to see that $G$ contains no cycles of length four and $|\bar{E}|=\binom{n}{2}-(n-d-2)$.

The following two upper bounds of $A(n,d;n-d-2)$ are obtained by simple double counting as proved, or by applying Reiman’s inequality \cite{neuwirth2001size,reiman1958problem} as used in  \cite{Yohananov2021}.

%



\begin{lemma}\label{m_nd2l}
\begin{itemize}
  \item[(1)] For any $1\leq d\le n-2$, $A(n,d;n-d-2)\le \binom{\binom{n}{2}}{2}/\binom{d+1}{2}$.
  \item[(2)] If $\binom{n}{2}-(n-d-2)<(d+1)^2$, $A(n,d;n-d-2) \le \frac{d\left(\binom{n}{2}-(n-d-2)\right)}{(d+1)^2-\left(\binom{n}{2}-(n-d-2)\right)}\leq  \frac{d\binom{n}{2}}{(d+1)^2-\binom{n}{2}}$.
\end{itemize}

\end{lemma}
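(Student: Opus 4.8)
The plan is to read both inequalities off the auxiliary bipartite graph $G=(\mathcal{D}\cup\bar E,\mathcal E)$ already constructed, in which each tree-codeword $T\in\mathcal D$ has degree exactly $d+1$ and, by the $C_4$-freeness noted above, any two tree-codewords have at most one common neighbour in $\bar E$. Both bounds come from double counting \emph{cherries} (paths of length two in $G$), estimated once from the $\mathcal D$-side and once from the $\bar E$-side; part (2) is the Reiman-type refinement of part (1).

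For part (1), I would count the unordered pairs $\{e,e'\}\subseteq\bar E$ that are simultaneously edges of some tree-codeword. Each $T\in\mathcal D$ contributes exactly $\binom{d+1}{2}$ such pairs, one for every $2$-subset of its $d+1$ edges in $\bar E$. Since two distinct tree-codewords meet in at most one edge of $\bar E$, no pair $\{e,e'\}$ can be contributed by two different tree-codewords, so the $|\mathcal D|\binom{d+1}{2}$ pairs produced this way are distinct members of $\binom{\bar E}{2}$. Hence $|\mathcal D|\binom{d+1}{2}\le\binom{|\bar E|}{2}\le\binom{\binom{n}{2}}{2}$, and rearranging gives $A(n,d;n-d-2)\le\binom{\binom{n}{2}}{2}/\binom{d+1}{2}$.

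For part (2), I would additionally use the degree sequence on the $\bar E$-side. Write $M=|\mathcal D|$, $N=|\bar E|=\binom{n}{2}-(n-d-2)$, and $r_e=\deg_G(e)$ for $e\in\bar E$. Counting the edges of $G$ from the $\mathcal D$-side gives $\sum_{e\in\bar E}r_e=M(d+1)$, while counting cherries with centre in $\bar E$ gives $\sum_{e\in\bar E}\binom{r_e}{2}\le\binom{M}{2}$, because such a cherry is determined by its pair of endpoints $\{T,T'\}\subseteq\mathcal D$ and that pair yields at most one cherry ($T$ and $T'$ share at most one edge of $\bar E$). Applying convexity of $x\mapsto\binom{x}{2}$ to the $r_e$ gives $\sum_e\binom{r_e}{2}\ge N\binom{M(d+1)/N}{2}$. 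Combining the two and simplifying (we may assume $M\ge 2$, the bound being trivial otherwise) collapses to $M\big((d+1)^2-N\big)\le dN$; under the hypothesis $N<(d+1)^2$ this gives $M\le dN/((d+1)^2-N)$. The final estimate $dN/((d+1)^2-N)\le d\binom{n}{2}/((d+1)^2-\binom{n}{2})$ then follows from monotonicity of $t\mapsto dt/((d+1)^2-t)$ on $[0,(d+1)^2)$ together with $N\le\binom{n}{2}$, which is meaningful precisely when $\binom{n}{2}<(d+1)^2$.

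As for difficulty: there is essentially no conceptual obstacle, since both parts are standard double-counting arguments once the $C_4$-free bipartite graph is in hand. The points that need care are the use of the ``at most one common edge'' property to guarantee that each cherry is counted exactly once, the legitimacy of the convexity step for the (generally non-integral) average degree $M(d+1)/N$, and keeping track of the proviso $\binom{n}{2}<(d+1)^2$ that keeps the denominator in the last bound positive.
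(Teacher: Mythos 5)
Your proposal is correct and matches the paper's argument almost line for line: part (1) is the same double count of $2$-stars centred on the $\mathcal D$-side using $C_4$-freeness to make each pair of $\bar E$-edges count at most once, and part (2) is the same $2$-star count centred on the $\bar E$-side combined with convexity (Jensen) and the degree identity $\sum_{e} \deg(e) = |\mathcal D|(d+1)$, giving $|\mathcal D|((d+1)^2-|\bar E|)\le d|\bar E|$.
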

\begin{proof} 
The first bound is obtained by double counting the number of $2$-stars centered in $\mathcal{D}$.
Let $S=\{(T,e_1,e_2): T\in \mathcal{D}, e_1,e_2\in \bar{E}, T \sim e_1, T\sim e_2\}$. When we fix $T\in \mathcal{D}$, we get $|S|=|\mathcal{D}|\binom{d+1}{2}$. However when we fix a pair $\{e_1,e_2\}\subset \bar{E}$, there exists at most one $T\in \mathcal{D}$ connected to both of them. So $|S|\le \binom{|\bar{E}|}{2}\le \binom{\binom{n}{2}}{2}$ and then $|\mathcal{D}|\le \binom{\binom{n}{2}}{2}/\binom{d+1}{2}$.

For the second one, it is equivalent to show that if $|\bar{E}|<(d+1)^2$, $|\mathcal{D}| \le \frac{d|\bar{E}|}{(d+1)^2-|\bar{E}|}$. By similarly double counting the number of 2-stars centered in $\bar{E}$, we  get
\begin{equation}\label{eqdstar}
                        \binom{|\mathcal{D}|}{2} \ge \sum_{e\in \bar{E}}\binom{d(e)}{2}.
                      \end{equation}
Here, $d(e)$ is the degree of $e$ in the graph $G$. By Jensen's Inequality, we have $\sum_{e\in \bar{E}}\binom{d(e)}{2}\geq |\bar{E}|\frac{\bar{d}(\bar{d}-1)}{2}$, where $\bar{d}$ is the average degree of vertices in $\bar{E}$. Inserting  $\bar{d}=\frac{(d+1)|\mathcal{D}|}{|\bar{E}|}$ into $\binom{|\mathcal{D}|}{2} \ge |\bar{E}|\frac{\bar{d}(\bar{d}-1)}{2}$, we get $|\mathcal{D}|((d+1)^2-|\bar{E}|)\leq d|\bar{E}|$. Then the result follows.
\end{proof}

Now we give our second upper bound of $A(n,d)$ for general $d$ by applying \sref{Lemmas}{m_nd2g} and \ref{m_nd2l}.

\begin{theorem}[\textbf{The second upper bound}]\label{upperbound_second}
Suppose that $1<d\leq n-2$ and $n$ is sufficiently large.
	\begin{enumerate}[(1)]
		\item When $d=\delta n$ with $\delta\in (0.1313,1)$, $A(n,d)\le 1.2(C_\delta n)^{n-d}$, where $C_\delta=\frac{e}{2}\delta^{\frac{\delta}{1-\delta}}\in (1/2,1)$.
		Note that $C_\delta$ is decreasing in $\delta$ and $C_\delta\to \frac{1}{2}$ as $\delta\to 1$.
		\item When $d=n-\theta$ with $2\leq \theta=o(n)$, we have
\begin{itemize}
   \item[(i)] $A(n,d)\le ((\frac{1}{2}+o(1))n)^{n-d-1}$ if $\Omega(\sqrt{n})=\theta=o(n)$;
		\item[(ii)] $A(n,d)\le 2.1\left(\frac{n}{2}\right)^{n-d-1}$ if $\theta=o(\sqrt{n})$; and
		\item[(iii)] $A(n,d)\le 2\left(\frac{n}{2}\right)^{n-d-1}+O(n^{n-d-2})$ if $\theta$ is a constant.
 \end{itemize}
		
		
\end{enumerate}

\end{theorem}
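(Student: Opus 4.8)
The plan is to combine Lemma~\ref{m_nd2g} with the two bounds in Lemma~\ref{m_nd2l}, choosing whichever of the two is stronger in the relevant regime of $d$, and then carefully track the constants. Recall that Lemma~\ref{m_nd2g} gives
\[
A(n,d)\le \frac{\binom{n}{2}^{n-d-2}}{(d+2)(d+3)\cdots(n-1)}\,A(n,d;n-d-2),
\]
so once we substitute an upper bound $B$ for $A(n,d;n-d-2)$ we obtain $A(n,d)\le B\cdot\binom{n}{2}^{n-d-2}/\bigl((n-1)!/(d+1)!\bigr)$. The product $(d+2)\cdots(n-1)$ is the main quantity to estimate: writing it as $(n-1)!/(d+1)!$ and using Stirling, its logarithm is $(n-d-2)\ln n - n + d + O(\log n)$ type expression, which is where the factor $\delta^{\delta/(1-\delta)}$ and the $e/2$ will ultimately come from after dividing by the $\binom{n}{2}^{n-d-2}\approx (n^2/2)^{n-d-2}$ term.

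For part~(1), where $d=\delta n$, I would use the first bound of Lemma~\ref{m_nd2l}, namely $A(n,d;n-d-2)\le \binom{\binom{n}{2}}{2}/\binom{d+1}{2}=O(n^2)$, since here $(d+1)^2$ is only a constant multiple of $\binom n2$ and the second bound of Lemma~\ref{m_nd2l} is not available (or not better) unless $\delta$ is close to $1$. Plugging $B=O(n^2)$ into the displayed inequality and simplifying, $A(n,d)\le O(n^2)\cdot(n^2/2)^{n-d-2}\cdot(d+1)!/(n-1)!$; then apply Stirling's formula to $(d+1)!/(n-1)!$ with $d=\delta n$, i.e. $\ln\!\frac{(d+1)!}{(n-1)!}= d\ln d - n\ln n + (n-d) + O(\log n)$. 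Collecting the $n$-th-power terms gives exponential base $\tfrac{n^2}{2}\cdot\tfrac{d^{d}e^{-d}}{n^{n}e^{-n}}$ raised to appropriate powers; isolating the $(n-d)$-th root yields $C_\delta n$ with $C_\delta=\frac e2\,\delta^{\delta/(1-\delta)}$, and the leftover polynomial factors are absorbed into the constant $1.2$ for $n$ large. The constraint $\delta>0.1313$ should emerge precisely as the condition that $C_\delta<1$ (equivalently $\tfrac e2\delta^{\delta/(1-\delta)}<1$), and monotonicity of $C_\delta$ plus $C_\delta\to\tfrac12$ as $\delta\to1$ is a short calculus check on $g(\delta)=\ln\tfrac e2+\tfrac{\delta}{1-\delta}\ln\delta$.

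For part~(2), where $d=n-\theta$ with $\theta=o(n)$, the second bound of Lemma~\ref{m_nd2l} becomes available once $(d+1)^2>\binom n2-(n-d-2)$, which holds for large $n$ since $(d+1)^2\sim n^2$ dominates $\binom n2\sim n^2/2$; it gives $A(n,d;n-d-2)\le \frac{d\binom n2}{(d+1)^2-\binom n2}=(1+o(1))\,\frac{d\cdot n^2/2}{n^2/2}=(1+o(1))n$ — more precisely one must expand $(d+1)^2-\binom n2 = \tfrac{n^2}{2}+O(n\theta)$ and $d\binom n2=\tfrac{n^3}{2}+O(n^2\theta)$, so $B=(1+O(\theta/n))\,n$, which is where the three sub-cases (i)/(ii)/(iii) split according to the size of $\theta$ relative to $\sqrt n$. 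Feeding $B=(1+O(\theta/n))n$ into $A(n,d)\le B\cdot(n^2/2)^{n-d-2}\cdot(d+1)!/(n-1)!$ and now estimating $(d+1)!/(n-1)! = \prod_{j=d+2}^{n-1} j^{-1}$, a product of only $\theta-2=n-d-2$ terms each close to $n$, so $(d+1)!/(n-1)! = n^{-(n-d-2)}(1+o(1))^{n-d-2}\cdot e^{-\Theta(\theta^2/n)}$; combined with $(n^2/2)^{n-d-2}=n^{2(n-d-2)}2^{-(n-d-2)}$ this collapses to $(n/2)^{n-d-2}$ times a correction factor $e^{O(\theta^2/n)}$, and multiplying by the leading $B\sim n$ turns $n-d-2$ into $n-d-1$. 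When $\theta=o(\sqrt n)$ the correction $e^{O(\theta^2/n)}\to1$ so the constant is $2+o(1)$, refined to the clean $2.1$ bound, and when $\theta$ is constant one keeps one more term to get the additive $O(n^{n-d-2})$; when $\Omega(\sqrt n)=\theta=o(n)$ the correction is $e^{o(n)}=(1+o(1))^{n-d}$ and gets absorbed into the $(\tfrac12+o(1))^{n-d-1}$ base.

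I expect the main obstacle to be bookkeeping rather than conceptual: controlling the lower-order terms in the Stirling expansion of $(d+1)!/(n-1)!$ uniformly across the whole range $0.1313n\le d\le n-2$, and in particular tracking how the error term $e^{\Theta(\theta^2/n)}$ in the $\theta=o(n)$ regime interacts with the polynomial prefactors so that the claimed numerical constants ($1.2$ in part~(1), $2.1$ in part~(2)(ii)) are actually attained for all sufficiently large $n$. The verification that the two Lemma~\ref{m_nd2l} bounds cross over near $d=\delta n$ with the right $\delta$ — i.e. that the $O(n^2)$ bound is the operative one for $\delta$ bounded away from $1$ while the $(1+o(1))n$ bound takes over once $n-d=o(n)$ — also needs to be made precise, but this is a routine comparison of the two explicit expressions.
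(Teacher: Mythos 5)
Your proposal is correct and follows essentially the same route as the paper's proof: combine Lemma~\ref{m_nd2g} with Lemma~\ref{m_nd2l}, using the $\binom{\binom{n}{2}}{2}/\binom{d+1}{2}=O(n^2)$ bound when $d=\delta n$ and the second bound (giving $(1+O(\theta/n))n$) once $n-d=o(n)$, then estimate $(d+1)!/(n-1)!$ against $\binom{n}{2}^{n-d-2}$ by Stirling (or, equivalently, by the elementary inequality $1-x\ge e^{-x(1+x)}$ as the paper does), with the $\theta$ versus $\sqrt n$ threshold governing whether the $e^{\Theta(\theta^2/n)}$ correction is $1+o(1)$ or a $(1+o(1))^{n-d}$ factor. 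Your observation that $\delta>0.1313$ is exactly the threshold $C_\delta<1$ matches the paper (it also keeps the polynomial prefactor $\frac{1+o(1)}{\sqrt\delta\,e^{1-\delta}}$ below $1.2$).
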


\begin{proof}
	When $d=\delta n$, we have $A(n,d;n-d-2)\le \frac{1}{4\delta^2}n^2$ by \sref{Lemma}{m_nd2l} (1). Then by \sref{Lemma}{m_nd2g} and Stirling's Formula,
	\begin{align*}
        A(n,d)&\le \frac{\frac{1}{4\delta^2}n^2\cdot \binom{n}{2}^{n-d-2}}{(d+2)\cdots(n-2)(n-1)}= \frac{1}{4\delta^2}n^2\left(\frac{n^2}{2}\right)^{n-d-2}\cdot \left(1-\frac{1}{n}\right)^{n-d-2}\cdot (d+1)n\frac{d!}{n!}\\
        &\le \frac{1+o(1)}{\delta e^{1-\delta}}\left(\frac{n^2}{2}\right)^{n-d}\cdot \frac{d!}{n!} \le
        \frac{1+o(1)}{\delta e^{1-\delta}}\left(\frac{n^2}{2}\right)^{n-d}\cdot \frac{\sqrt{2\pi d}(\frac{d}{e})^d}{\sqrt{2\pi n}\left(\frac{n}{e}\right)^n}\\
        &= \frac{1+o(1)}{\delta e^{1-\delta}}\left(\frac{n^2}{2}\right)^{n-d}\cdot \sqrt{\delta}(e\delta^{\frac{\delta}{1-\delta}}/n)^{n-d}\\
        &\le 1.2\left(\frac{e}{2}\delta^{\frac{\delta}{1-\delta}} n\right)^{n-d},
    \end{align*}  where the last inequality holds when $\delta\in (0.1313,1)$.

When $d=n-\theta$ with $2\leq \theta=o(n)$, 
we have $\binom{n}{2}\le (d+1)^2$ when $n$ is big. By  \sref{Lemma}{m_nd2l} (2), we have
	\[A(n,d;n-d-2)\le \frac{d\binom{n}{2}}{(d+1)^2-\binom{n}{2}}\le \frac{d\binom{n}{2}}{(n-\theta)^2-\frac{n^2}{2}}\le \frac{d\binom{n}{2}}{\frac{n^2}{2}-2n\theta}\le \frac{(n-\theta)n}{n-4\theta}\le n+4\theta\] for large $n$.
Similarly by \sref{Lemma}{m_nd2g} and $1-x\ge e^{-x(1+x)}$ for $0\le x\le \sqrt{2}-1$, we have
	\begin{align*}
		A(n,d)&\le \frac{(n+4\theta)\cdot \binom{n}{2}^{n-d-2}}{(d+2)\cdots(n-2)(n-1)}\\
		&\le \frac{(n+4\theta)\cdot \left(\frac{n^2}{2}\right)^{n-d-2}}{n^{n-d-2}\left(1-\frac{c-2}{n}\right)\cdots\left(1-\frac{2}{n}\right)\left(1-\frac{1}{n}\right)}\\
		&\le
		(2+8\theta/n)\left(\frac{n}{2}\right)^{n-d-1}\cdot e^{\sum_{i=1}^{\theta-2}\frac{i}{n}(1+\frac{i}{n})}\\
		&\le (2+8\theta/n)e^{\frac{\theta^2}{2n}\left(1+\frac{2\theta}{3n}\right)} \left(\frac{n}{2}\right)^{n-d-1}.
\end{align*}
When $\Omega(\sqrt{n})=\theta=o(n)$, we have $\theta/n=o(1)$, $e^{\frac{\theta^2}{2n}}=(1+o(1))^{n-d-1}$, and thus
\[A(n,d)\le \left((1/2+o(1))n\right)^{n-d-1}.\]
When $\theta=o(\sqrt{n})$, we have $\frac{\theta^2}{n}=o(1)$, $e^{\frac{\theta^2}{2n}}=1+o(1)$, and thus
\[A(n,d)\le (2+o(1))\left(\frac{n}{2}\right)^{n-d-1}\le 2.1\left(\frac{n}{2}\right)^{n-d-1}. \]
Especially when $\theta$ is a constant, we have  $e^{\frac{\theta^2}{2n}\left(1+\frac{2\theta}{3n}\right)}=e^{O(n^{-1})(1+O(n^{-1}))}=1+O(n^{-1})$, and thus
\[A(n,d)\le (2+O(n^{-1}))\left(\frac{n}{2}\right)^{n-d-1}= 2\left(\frac{n}{2}\right)^{n-d-1}+O(n^{n-d-2}). \]
\end{proof}

By \autoref{upperbound_second} (2)-(iii), we have
    $A(n,n-3)\le \frac{1}{2}n^2+O(n), A(n,n-4)\le \frac{1}{4}n^3+O(n^2)$ and $A(n,n-5)\le \frac{1}{8}n^4+O(n^3)$. In the next subsection, we present explicit upper bounds for these parameters.

\subsection{Explicit bounds for  $d=n-3,n-4,n-5$}

The following lemma is a refinement of \sref{Lemma}{m_nd2l} when $d=n-3,n-4,n-5$.

%
%
%
%
%

\begin{lemma}\label{n-345}
\begin{itemize}
  \item[(1)] For $n\geq 13$, $A(n,n-3;1)\le n+2$.
  \item[(2)] For $n\geq 35$, $A(n,n-4;2)\le n+4$.
    \item[(3)] For $n\geq 69$, $A(n,n-5;3)\le n+6$.
\end{itemize}
\end{lemma}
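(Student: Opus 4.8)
The plan is to reuse the $C_4$-free bipartite graph $G=(\mathcal D\cup\bar E,\mathcal E)$ constructed just before \sref{Lemma}{m_nd2l}, and to sharpen the argument there by exploiting integrality of the degrees in place of the real-variable Jensen step. Write $\theta=n-d$, so $\theta\in\{3,4,5\}$ in the three cases; then the common set $E$ has $\theta-2$ edges, $|\bar E|=\binom n2-(\theta-2)=:N$, every tree-codeword meets $\bar E$ in exactly $k:=d+1=n-\theta+1$ edges, and we put $M:=A(n,d;n-d-2)=|\mathcal D|$. Counting edges of $G$ in two ways gives $\sum_{e\in\bar E}d(e)=Mk$; and since any two codewords share at most $n-1-d=\theta-1$ edges, of which the $\theta-2$ edges of $E$ are already common, they share at most one edge of $\bar E$, so $G$ is $C_4$-free and $\sum_{e\in\bar E}\binom{d(e)}2\le\binom M2$.

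The new ingredient is the elementary pointwise inequality $\binom m2\ge 2m-3$, valid for all integers $m\ge0$ since $\binom m2-(2m-3)=\tfrac12(m-2)(m-3)\ge0$, with equality exactly at $m\in\{2,3\}$ --- the degrees one expects, as the average of $d(e)$ is $Mk/N\approx2$. Summing it over $\bar E$ and combining with the two relations above,
\[
\binom M2\ \ge\ \sum_{e\in\bar E}\binom{d(e)}2\ \ge\ 2\sum_{e\in\bar E}d(e)-3N\ =\ 2Mk-3N,
\]
i.e.\ $f(M):=\binom M2-2Mk+3N\ge0$. Here $f$ is an upward parabola with vertex near $M=2k\approx 2n$; \sref{Lemma}{m_nd2l}(2) already forces $M$ to be roughly $n$, hence well to the left of the vertex (and of the larger root of $f$), so $f$ is decreasing throughout the range where $M$ can lie. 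It therefore suffices to verify that $f$ is strictly negative at $M=n+2(\theta-2)+1$. Substituting $k=n-\theta+1$ and $N=\binom n2-(\theta-2)$ one computes
\[
f\bigl(n+2(\theta-2)+1\bigr)\ =\ 12-n,\qquad 34-n,\qquad 68-n
\]
for $\theta=3,4,5$ respectively, which is negative precisely when $n\ge 13,\ 35,\ 69$. For such $n$ the size $M=n+2(\theta-2)+1$ is impossible, and by monotonicity of $f$ (with the crude cap of \sref{Lemma}{m_nd2l}(2)) so is every larger admissible $M$; hence $M\le n+2(\theta-2)$, which is (1), (2), (3).

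The computations are routine; the delicate part --- and the reason the cutoffs are exactly $13,35,69$ --- is that the value of $f\bigl(n+2(\theta-2)+1\bigr)$ must be computed with the exact integers $k$, $N$, $\binom M2$ without any rounding, and one must check in the same ranges of $n$ that \sref{Lemma}{m_nd2l}(2) really does keep $M$ below the larger root of $f$, so that the monotonicity step is legitimate. I expect this bookkeeping, rather than any conceptual difficulty, to be the main obstacle. An equivalent, arguably cleaner, packaging is to lower bound $\sum_{e\in\bar E}\binom{d(e)}2$ by its minimum over nonnegative integer sequences of length $N$ with prescribed sum $Mk$ --- attained by the ``almost-equal'' sequence, whose entries lie in $\{2,3\}$ in the relevant regime --- which reproduces $2Mk-3N$ and exhibits its near-tightness, hence the sharpness of the thresholds.
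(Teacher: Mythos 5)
Your proof is correct and arrives at exactly the same quadratic inequality as the paper's argument; the route you take is a cleaner packaging of essentially the same idea. The paper, after first invoking \sref{Lemma}{m_nd2l}(2) to get the crude cap $|\mathcal D|\le n+2(\theta-2)+2$, splits on whether the average degree $\bar d$ in $\bar E$ is at most $2$ or lies in $(2,3)$, and in the latter case explicitly introduces the counts $x,y$ of degree-$2$ and degree-$3$ vertices, argues that this is the minimizing configuration, and eliminates $x,y$ to obtain the same quadratic $\binom M2-2Mk+3N\ge 0$. Your pointwise chord inequality $\binom m2\ge 2m-3$ (with equality exactly at $m\in\{2,3\}$) delivers the identical bound in one line with no case split and no appeal to where the integer minimum of a convex sum is attained --- you even note at the end that the ``almost-equal'' packaging is an equivalent formulation, and indeed that is precisely the paper's presentation. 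The numerical evaluations $f(n+2(\theta-2)+1)=12-n,\ 34-n,\ 68-n$ check out, and the monotonicity step is legitimate: the vertex of $f$ sits near $2k\approx 2n$, while \sref{Lemma}{m_nd2l}(2) caps $M$ by roughly $n+4$ (e.g.\ $M\le\lfloor(n-3)(n+1)/(n-5)\rfloor\le n+4$ for $n\ge13$ when $\theta=3$), so $M$ is confined well to the left of the larger root. In short: same underlying argument and same final inequality, but your tangent-line derivation is tidier and avoids the paper's auxiliary case analysis.
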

\begin{proof} For $n\geq 13$ and $d=n-3$,  we have $\binom{n}{2}-(n-d-2)<(d+1)^2$. By \sref{Lemma}{m_nd2l} (2), $|\mathcal{D}|=A(n,n-3;1)\le \frac{(n-3)\left(\binom{n}{2}-1\right)}{(n-2)^2-\left(\binom{n}{2}-1\right)}=\frac{(n-3)(n+1)}{n-5}\leq n+4$. Reconsider the graph $G=(\mathcal{D}\cup \bar{E}, \mathcal{E})$ defined before. In this case, $|\mathcal{E}|=|\mathcal{D}|(n-2)$ and $|\bar{E}|=\frac{(n-2)(n+1)}{2}$. So the average degree of each vertex in $\bar{E}$ is $\bar{d}= |\mathcal{E}|/|\bar{E}|= \frac{2(n-3)}{n-5}<3$.

If $\bar{d}\leq 2$, then $|\mathcal{D}|=|\mathcal{E}|/(n-2)\leq 2|\bar{E}|/(n-2)=n+1$.

If $2<\bar{d}<3$, then the right hand side of Eq. (\ref{eqdstar}) is minimized when all $d(e)$ is $2$ or $3$. Let $x,y$ be the number of $e\in \bar{E}$ whose degrees are $2$ and $3$, respectively. Then
	\begin{equation}
		\left\{
		\begin{array}{ll}
			x+y=\frac{(n-2)(n+1)}{2},\\
			2x+3y=|\mathcal{D}|(n-2),  \\
			x+3y\le \binom{|\mathcal{D}|}{2}.
		\end{array}
		\right.
	\end{equation}
Multiplying the second equation by $2$, subtracting the first one by $3$, and combining it with the third one, we get  $|\mathcal{D}|^2-(4n-7)|\mathcal{D}|+3(n-2)(n+1)\geq 0$. However, it is not possible when $|\mathcal{D}|=n+3$ or $n+4$.

For $d=n-4$ and $n-5$, the proofs are similar and thus omitted. \end{proof}

Combining \sref{Lemmas}{m_nd2g} and \autoref{n-345}, we have the following explicit upper bounds of $A(n,d)$ for $d=n-3,n-4,n-5$.

 \begin{theorem}\label{thm_iii.4}
     \begin{itemize}
        \item[(1)]  $A(n,n-3)\le \frac{1}{2}n(n+2)$ for $n\ge 13$;
		\item[(2)]  $A(n,n-4)\le \frac{1}{4} \left(n^3+5 n^2+6 n+12\right)$ for $n\ge 35$; and
		\item[(3)]  $A(n,n-5)\le \frac{1}{8} \left(n^4+9 n^3+28 n^2+92 n+292\right)$ for $n\ge 117$.
     \end{itemize}
 \end{theorem}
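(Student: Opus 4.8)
The plan is to combine the pigeonhole reduction of \sref{Lemma}{m_nd2g} with the refined bounds of \sref{Lemma}{n-345}, and then read the explicit polynomial off the resulting rational expression. For $d\in\{n-3,n-4,n-5\}$ we have $n-d-2\in\{1,2,3\}$, so iterating the single pigeonhole step from the proof of \sref{Lemma}{m_nd2g} --- and it pays to retain the sharper denominators $\binom{n}{2}-i+1$ rather than the loosened $\binom{n}{2}$ of the lemma statement --- yields
\[
A(n,d)\ \le\ \Bigl(\prod_{i=1}^{n-d-2}\frac{\binom{n}{2}-i+1}{\,n-i\,}\Bigr)\, A(n,d;n-d-2).
\]
Using $\binom{n}{2}-1=\tfrac{(n-2)(n+1)}{2}$ and $\binom{n}{2}-2=\tfrac{n^2-n-4}{2}$, the product telescopes to $n/2$ when $d=n-3$, to $n(n+1)/4$ when $d=n-4$, and to $n(n+1)(n^2-n-4)/\bigl(8(n-3)\bigr)$ when $d=n-5$.

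Substituting the bounds $A(n,n-3;1)\le n+2$, $A(n,n-4;2)\le n+4$ and $A(n,n-5;3)\le n+6$ from \sref{Lemma}{n-345} (available once $n\ge 13,35,69$, respectively), the case $d=n-3$ is immediate: $A(n,n-3)\le\tfrac{n}{2}(n+2)$. For $d=n-4$ we obtain $A(n,n-4)\le\tfrac14 n(n+1)(n+4)=\tfrac14(n^3+5n^2+4n)$, which is already at most the claimed $\tfrac14(n^3+5n^2+6n+12)$. For $d=n-5$ we obtain $A(n,n-5)\le\tfrac{n(n+1)(n^2-n-4)(n+6)}{8(n-3)}$; one polynomial long division rewrites this as $\tfrac18(n^4+9n^3+28n^2+92n+292)$ minus a quadratic in $n$ plus a tail of order $1/n$, and since the quadratic dominates the tail over the admissible range, the claimed bound follows. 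Integrality of $A(n,d)$ can be used to absorb leftover fractional parts; this is what one needs if the cruder denominators $\binom{n}{2}$ are used instead of $\binom{n}{2}-i+1$, and with the cruder version the $d=n-5$ estimate only closes once $n\ge 117$, which accounts for that threshold in the statement.

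The routine work is the polynomial bookkeeping: the telescoping simplifications and the long divisions. I expect the one genuinely delicate point to be the $d=n-5$ case, where the remainder left after the division is bulkiest and one has to decide whether to carry the sharp denominators $\binom{n}{2}-i+1$ all the way through (which makes the target polynomial dominate already for modest $n$) or to work with $\binom{n}{2}$ and then close the small residual gap via an integrality argument tied to the larger threshold $n\ge 117$. Pinning down which of these yields exactly the stated constants and thresholds is the main obstacle; everything else is direct substitution.
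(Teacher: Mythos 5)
Your proof is correct and follows the same skeleton as the paper: combine the pigeonhole reduction of Lemma~\ref{m_nd2g} with the refined intersection bounds of Lemma~\ref{n-345} and then expand the resulting rational expression. You go one step beyond the paper, however, by iterating the \emph{sharp} denominators $\binom{n}{2}-i+1$ from the proof of Lemma~\ref{m_nd2g} rather than the loosened $\binom{n}{2}$ from its statement. The paper's stated polynomials are exactly the quotients of the crude version: e.g., for $d=n-5$ one has $\frac{n^3(n-1)^2(n+6)}{8(n-2)(n-3)}=\frac{1}{8}\bigl(n^4+9n^3+28n^2+92n+292\bigr)+\frac{908n-1752}{8(n^2-5n+6)}$, and the residual term drops below $1$ precisely at $n\ge 117$ --- which you correctly identified as the source of that threshold. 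Your sharp version gives for $d=n-4$ the cleaner bound $\frac{n(n+1)(n+4)}{4}=\frac{n^3+5n^2+4n}{4}$, and for $d=n-5$ the bound $\frac{n^4+9n^3+22n^2+32n+72}{8}+\frac{27}{n-3}$, both of which dominate the stated polynomials without any integrality handwaving, and in fact your route certifies $(3)$ already for $n\ge 69$ (the threshold of Lemma~\ref{n-345}(3)) rather than $n\ge 117$.

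One small caveat worth flagging: the integrality argument you sketch for the crude route is a bit more delicate than ``residual $<1$ implies floor is fine,'' because $\lfloor Q+r\rfloor\le Q$ can fail when $Q$ itself is non-integral and $r$ exceeds the complementary fractional part of $Q$. This does not affect the correctness of the theorem --- your sharp-denominator computation bypasses the issue entirely --- but it means the crude-plus-integrality route needs a slightly more careful check (examining the residue classes of the quotient polynomial mod the denominator), rather than just $n\ge 117$.
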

 Finally, we mention that the proof for the second upper bound uses the restricted intersection property of pairwise codewords, but does not use the tree property of each codeword. Further, as shown later in \autoref{smlcnst}, there exist $n-2$ spanning subgraphs (may contain cycles) each having $n-1$ edges, all of which intersect on a common edge and with pairwise intersection on at most $2$ edges. That is, without considering the tree property, \sref{Lemma}{n-345} (1) and thus \autoref{thm_iii.4} (1) cannot be improved much. In fact, we guess that the bound in \autoref{thm_iii.4} (1) is almost tight.

\section{Lower bounds on the size of tree-codes}\label{con}
 In this section, we  give several general lower bounds for $A(n,d)$. In \autoref{thm_6_pre} (3), we know that for  $d\leq n/2$, $A(n,d)=\Omega(n^{n-2d})$. Our results improve this lower bound for any $ d \ge 2\sqrt{n}$. Especially when $d=\delta n$ with $0<\delta<1$, we have $A(n,d)=\Omega(cn)^{n-d}$ for constant $c\in (0,1)$ depending on $\delta$. Further, we show that
 for any fixed integer $k\geq 1$, $A(n,d)= \Omega(n^k)$ when $d=n-\frac{(k+1)\ln n}{\ln\ln n}$.  Finally, we provide several explicit constructions of tree-codes, including $(n,\frac{(n-3)(n-9)}{9},n-4)$ tree-codes and $(n,\frac{n(n-2)^2}{32},n-13)$ tree-codes, which imply $\Omega(n^2)=A(n,n-4)=O(n^3)$ and $\Omega(n^3)=A(n,n-13)=O(n^{12})$. All these results show that even when $d$ approaches to $n$ with a constant gap, it is possible to have a code of size $\Omega(n^2)$ and even $\Omega(n^3)$.



\subsection{General lower bounds for $A(n,d)$}
The idea is to construct a large tree-code from a large independent set of a graph whose vertices are all spanning trees.



\begin{theorem}\label{lbged}
    For all $n>d\ge 1$, $A(n,d)\ge \binom{n-1}{d-1}^{-1}d^d n^{n-2d}$. Further if $d>\frac{n}{2}$,  $A(n,d)\ge \binom{n-1}{d-1}^{-1}(\frac{n}{2})^{n-d}$.
\end{theorem}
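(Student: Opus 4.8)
The plan is to realize $A(n,d)$ as the independence number of a conflict graph and bound it by the greedy (Caro--Wei / Tur\'an-type) lower bound. Let $H$ be the graph on vertex set $\mathcal{T}(n)$ in which two distinct spanning trees are adjacent exactly when their tree-distance is at most $d-1$, i.e.\ when they share at least $n-d$ edges. An independent set of $H$ is precisely an $(n,d)$ tree-code, so $A(n,d)=\alpha(H)$; since $|\mathcal{T}(n)|=n^{n-2}$ by Cayley's formula and $\alpha(H)\ge |V(H)|/(\Delta(H)+1)$, it suffices to upper bound every closed ball $B(T):=\{T'\in\mathcal{T}(n):d(T,T')\le d-1\}$, as $\Delta(H)+1\le\max_T|B(T)|$.

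To do so, fix $T$ and note that for any $T'\in B(T)$ the set $E(T)\cap E(T')$ is a sub-forest of $T$ with at least $n-d$ edges, hence with at most $d$ components; deleting a few of its edges produces a sub-forest $F\subseteq E(T)$ with exactly $d$ components and $F\subseteq E(T')$. Therefore $|B(T)|\le\sum_F\#\{T'\in\mathcal{T}(n):F\subseteq E(T')\}$, the sum being over the $\binom{n-1}{d-1}$ sub-forests $F$ of $T$ having $d$ components. If such an $F$ has component sizes $a_1,\dots,a_d$ (so $\sum_i a_i=n$ and $F$ has $\sum_i(a_i-1)=n-d$ edges), then the number of spanning trees of $K_n$ containing $F$ equals $n^{d-2}\prod_i a_i$: contract each component to a point to get a complete multigraph on $d$ vertices with $a_ia_j$ parallel edges between $i$ and $j$, and apply the weighted Cayley identity $\sum_{S\in\mathcal{T}(d)}\prod_i x_i^{\deg_S(i)}=\big(\prod_i x_i\big)\big(\sum_i x_i\big)^{d-2}$ at $x_i=a_i$ (equivalently, this is an instance of the generalized Cayley formula of \cite{takacs1990cayley}). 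Hence $|B(T)|\le\binom{n-1}{d-1}\,n^{d-2}\,\max_F\prod_i a_i$.

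Finally I bound $\prod_i a_i$ in two ways. By AM--GM on $\sum_i a_i=n$ we get $\prod_i a_i\le(n/d)^d$, whence $|B(T)|\le\binom{n-1}{d-1}n^{2d-2}d^{-d}$ and thus $A(n,d)\ge\binom{n-1}{d-1}^{-1}d^d n^{n-2d}$. Alternatively, using $\sum_i(a_i-1)=n-d$ and the elementary inequality $a\le 2^{a-1}$ for integers $a\ge1$, we get $\prod_i a_i\le 2^{\sum_i(a_i-1)}=2^{n-d}$, which gives $|B(T)|\le\binom{n-1}{d-1}2^{n-d}n^{d-2}$ and hence $A(n,d)\ge\binom{n-1}{d-1}^{-1}(n/2)^{n-d}$; a one-line comparison shows the second estimate dominates the first precisely when $(n/d)^d<2^{n-d}$, in particular for $d>n/2$. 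The one step that genuinely needs care is the exact count $n^{d-2}\prod_i a_i$ for the number of spanning trees extending $F$ (classical, but it should be proved rather than merely asserted) together with the observation that every $T'\in B(T)$ does contain some $d$-component sub-forest of $T$; the remaining two inequalities and the arithmetic are routine, so I expect no serious obstacle beyond pinning down the enumeration formula.
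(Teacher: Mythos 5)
Your proposal is correct and follows essentially the same route as the paper: the same conflict graph $H$ on $\mathcal{T}(n)$, the same $\alpha(H)\ge |V|/(\Delta+1)$ bound, the same enumeration of $d$-component sub-forests of a tree, the same count $n^{d-2}\prod_i a_i$ of extensions (the paper cites Lu--Mohr--Sz\'ekely rather than re-deriving it via contraction and weighted Cayley), and AM--GM for the first inequality. Your use of $a\le 2^{a-1}$ to get $\prod_i a_i\le 2^{n-d}$ is a slightly cleaner and more uniform substitute for the paper's partition argument (which splits parts into 1's and 2's using $d>n/2$), and you correctly observe that the resulting bound only dominates the AM--GM one in that regime.
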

\begin{proof}
To show the first lower bound $A(n,d)\ge \frac{d^d}{\binom{n-1}{d-1}}n^{n-2d}$, we consider the graph $H$ with vertex set $\mathcal{T}(n)$, the set of all spanning trees over $[n]$, where two trees are connected if and only if they share at least $n-d$ common edges.  Note that $|\mathcal{T}(n)|=n^{n-2}$, and any independent set of $H$ is an $(n,d)$ tree-code.  Let $\alpha(H)$ be the independence number of $H$, then $A(n,d)\ge \alpha(H)$. Let $\Delta$ be the maximum degree of $H$, then it is known that $\alpha(H)\ge \frac{|\mathcal{T}(n)|}{\Delta+1}$ by \cite{bondy2008graph}. So it suffices to upper bound $\Delta$.

For any given spanning tree in $\mathcal{T}(n)$, there are $\binom{n-1}{n-d}$ ways to choose $n-d$ edges. For each choice, let $F$ be the forest with such $n-d$ edges and $d$ components. Let $q_1,q_2,\ldots,q_d$ be the component sizes of $F$, then $\sum_{i=1}^{d}q_i= n$. By a result from Lu, Mohr and S\'{z}ekely\cite{lu2013quest}, the number of spanning trees in $\mathcal{T}(n)$ that contain $F$ is
\[q_1q_2\cdots q_d n^{n-2-\sum_{i=1}^{d}(q_i-1)}= q_1q_2\cdots q_d n^{d-2}\le \left(\frac{\sum_{i=1}^{d}q_i}{d}\right)^dn^{d-2}= \left(\frac{n}{d}\right)^dn^{d-2}.\]
So $\Delta\le \binom{n-1}{n-d}(\frac{n}{d})^dn^{d-2}$, and
\[A(n,d)\ge \alpha(H)\ge \frac{|\mathcal{T}(n)|}{\Delta+1}\ge \frac{n^{n-2}}{\binom{n-1}{n-d}(\frac{n}{d})^dn^{d-2}}= \frac{d^d}{\binom{n-1}{d-1}}n^{n-2d}.\]
For $d>\frac{n}{2}$, since average component size $\sum_{i=1}^{d}q_i/ n\in(1,2)$ and $q_i$'s are integers, we have $\prod_{i=1}^n q_i\le 1^{2d-n}2^{n-d}= 2^{n-d}$. In this case, we have $\Delta\le \binom{n-1}{n-d}2^{n-d}n^{d-2}$, and
\[A(n,d)\ge \frac{|\mathcal{T}(n)|}{\Delta+1}\ge \frac{n^{n-2}}{\binom{n-1}{n-d}2^{n-d}n^{d-2}}= \frac{(\frac{n}{2})^{n-d}}{\binom{n-1}{d-1}}.\]
\end{proof}




One can easily verify that the equality $\frac{d^d}{\binom{n-1}{d-1}}n^{n-2d}= \frac{(\frac{n}{2})^{n-d}}{\binom{n-1}{d-1}}$ holds at $d=n/2$. So the first lower bound is better for $d\le n/2$ and the second is better for $d> n/2$.
We have the following corollary of \autoref{lbged} by considering different ranges of $d$.

\begin{corollary}\label{corlow} Suppose that $1<d\leq n-2$ and $n$ is sufficiently large.

	

	\begin{enumerate}[(1)]
		\item $A(n,d)\ge (\frac{d}{\sqrt{e}n})^{2d}n^{n-d}$ when $d=o(n)$.
		Note that for $d\ge 2\sqrt{n}$, we have $A(n,d)\geq(\frac{4}{e})^{d}n^{n-2d}$, improving the previous bound $\Omega(n^{n-2d})$; for $d= n^{1-o(1)}$, we have $A(n,d)\ge n^{n-d-o(d)}$.
		
		\item $A(n,d)\ge 2.5(c_\delta n)^{n-d+1/2}$ when $d=\delta n$ with $\delta\in (0,1)$, where
		$c_\delta=(1-\delta)\delta^{\frac{\delta}{1-\delta}}\max\{\delta^{\frac{\delta}{1-\delta}},\frac{1}{2}\}\in (0,1)$ is decreasing in $\delta$, and $c_\delta\to 1$ as $\delta\to 0$.
		
		\item $A(n,d)\ge (\frac{n-d}{2e})^{n-d}$ when $n-d=o(n)$. Moreover, given any fixed integer $k\ge 1$, we have $A(n,d)= \Omega(n^k)$ when $d=n-\frac{(k+1)\ln n}{\ln\ln n}$.
\end{enumerate}


\end{corollary}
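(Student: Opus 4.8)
The plan is to derive everything in \autoref{corlow} directly from \autoref{lbged} by choosing the sharper of the two lower bounds according to the range of $d$, and then applying Stirling's formula to estimate the binomial coefficient $\binom{n-1}{d-1}$.

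\medskip

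\noindent\textbf{Part (1).} Here $d=o(n)$, so the first bound $A(n,d)\ge \binom{n-1}{d-1}^{-1}d^d n^{n-2d}$ is the relevant one. I would write $\binom{n-1}{d-1}\le \binom{n}{d}\le \left(\frac{en}{d}\right)^d$, giving
\[
A(n,d)\ge \left(\frac{d}{en}\right)^d d^d n^{n-2d}=\left(\frac{d^2}{en^2}\right)^d n^{n-d}=\left(\frac{d}{\sqrt{e}\,n}\right)^{2d} n^{n-d}.
\]
From this, the two consequences are routine substitutions: for $d\ge 2\sqrt n$ one has $\frac{d^2}{n}\ge 4$, so $\left(\frac{d^2}{en^2}\right)^d n^{n-d}=\left(\frac{d^2}{en}\right)^d n^{n-2d}\ge \left(\frac 4e\right)^d n^{n-2d}$; and for $d=n^{1-o(1)}$ we have $\left(\frac{d}{\sqrt e n}\right)^{2d}=n^{-o(d)}$ since $\log_n(\sqrt e n/d)=o(1)$.

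\medskip

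\noindent\textbf{Part (2).} For $d=\delta n$ with $\delta\in(0,1)$ fixed, I would estimate $\binom{n-1}{d-1}$ by Stirling: $\binom{n-1}{d-1}\sim \binom{n}{d}\cdot\frac{d}{n}$ and $\binom{n}{d}=(1+o(1))\frac{1}{\sqrt{2\pi \delta(1-\delta)n}}\left(\delta^{-\delta}(1-\delta)^{-(1-\delta)}\right)^n$. Plugging this into whichever of the two bounds in \autoref{lbged} is larger (the first when $\delta\le 1/2$, the second when $\delta\ge 1/2$, which is the origin of the $\max\{\delta^{\delta/(1-\delta)},1/2\}$ factor), and collecting the terms $n^{n-d}$, $d^d=(\delta n)^{\delta n}$, and the polynomial correction $\sqrt{n}$, should yield $A(n,d)\ge 2.5(c_\delta n)^{n-d+1/2}$ with the stated $c_\delta=(1-\delta)\delta^{\delta/(1-\delta)}\max\{\delta^{\delta/(1-\delta)},\frac12\}$. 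The monotonicity of $c_\delta$ and the limit $c_\delta\to 1$ as $\delta\to 0$ are one-variable calculus checks on the explicit formula.

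\medskip

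\noindent\textbf{Part (3).} Now $n-d=o(n)$; write $\theta=n-d$. Here the second bound $A(n,d)\ge \binom{n-1}{d-1}^{-1}(n/2)^{n-d}$ is the one to use. Since $\binom{n-1}{d-1}=\binom{n-1}{\theta-1}\le \binom{n-1}{\theta}\le \left(\frac{en}{\theta}\right)^{\theta}$, we get
\[
A(n,d)\ge \left(\frac{\theta}{en}\right)^{\theta}\left(\frac n2\right)^{\theta}=\left(\frac{\theta}{2e}\right)^{\theta}.
\]
For the last assertion, substitute $\theta=\frac{(k+1)\ln n}{\ln\ln n}$ and compute $\ln\left(\left(\frac{\theta}{2e}\right)^{\theta}\right)=\theta(\ln\theta-\ln(2e))$. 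Since $\ln\theta=\ln\ln n-\ln\ln\ln n+\ln(k+1)=(1-o(1))\ln\ln n$, this is $(1-o(1))\,\theta\ln\ln n=(1-o(1))(k+1)\ln n$, so $A(n,d)\ge n^{(k+1)(1-o(1))}=\Omega(n^k)$.

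\medskip

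\noindent The work is essentially bookkeeping: there is no conceptual obstacle once \autoref{lbged} is in hand. The only place demanding care is Part (2), where one must track the polynomial ($\sqrt n$-order) factors through Stirling's formula carefully enough to land the clean exponent $n-d+1/2$ and justify the explicit constant $2.5$, and must verify that the crossover at $\delta=1/2$ between the two bounds of \autoref{lbged} is exactly what produces the $\max$ in the definition of $c_\delta$.
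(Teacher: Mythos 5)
Your proposal is correct and follows essentially the same route as the paper: both proofs specialize the two lower bounds of \autoref{lbged} by range of $d$, bound $\binom{n-1}{d-1}$ via $\binom{n}{k}<(en/k)^k$ for parts (1) and (3), and invoke Stirling in part (2), where the crossover of the two bounds at $\delta=1/2$ yields the $\max$ in $c_\delta$ and the constant $2.5$ comes from $(1-o(1))\sqrt{2\pi}$. The only difference is that you sketch part (2) rather than writing out the Stirling manipulation, but your plan is exactly what the paper executes.
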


\begin{proof}
We apply the two bounds $A(n,d)\ge \binom{n-1}{d-1}^{-1}d^dn^{n-2d}$ and $A(n,d)\ge \binom{n-1}{d-1}^{-1}(\frac{n}{2})^{n-d}$ from \autoref{lbged}.
	
	\begin{enumerate}
		
		\item When $d= o(n)$, since $\binom{n}{k}< (\frac{en}{k})^k$, we have
\[A(n,d)\ge \frac{d^d}{\binom{n-1}{d-1}}n^{n-2d}\ge \frac{n}{d}\frac{d^d}{(en/d)^d}n^{n-2d}\ge \left(\frac{d}{\sqrt{e}n}\right)^{2d}n^{n-d}. \]
		Moreover, for $d\ge 2\sqrt{n}$, $A(n,d)\ge\left(\frac{2}{\sqrt{en}}\right)^{2d}n^{n-d}=(\frac{4}{e})^{d}n^{n-2d}$. For $d= n^{1-o(1)}$, $A(n,d)\ge (\frac{d}{\sqrt{e}n})^{2d}n^{n-d}= (\frac{1}{\sqrt{e}n^{o(1)}})^{2d}n^{n-d}= n^{n-d-o(d)}$.

\item When $d=\delta n$ with $\delta\in(0,\frac{1}{2}]$, we have
\begin{align*}
	A(n,d)\ge& \frac{d^d}{\binom{n-1}{d-1}}n^{n-2d}= \frac{n}{d}\frac{d!(n-d)!}{n!}d^dn^{n-2d}\\
	\ge& (1-o(1))\frac{n}{d}\frac{\sqrt{2\pi d}(\frac{d}{e})^d\cdot\sqrt{2\pi(n-d)}(\frac{n-d}{e})^{n-d}}{\sqrt{2\pi n}(\frac{n}{e})^n}d^dn^{n-2d}\\
	\ge& (1-o(1))\sqrt{2\pi}(\frac{d}{n})^{2d-1/2}(n-d)^{n-d+1/2}\\
	\ge& 2.5((1-\delta)\delta^{\frac{2d-1/2}{n-d+1/2}}n)^{n-d+1/2}\\
	\ge& 2.5((1-\delta)\delta^{\frac{2\delta}{1-\delta}}n)^{n-d+1/2}.
\end{align*}
When $d=\delta n$ with $\delta\in(\frac{1}{2},1)$, similarly we have
\begin{align*}
	A(n,d)\ge&(1-o(1))\frac{n}{d}\frac{\sqrt{2\pi d}(\frac{d}{e})^d\cdot\sqrt{2\pi(n-d)}(\frac{n-d}{e})^{n-d}}{\sqrt{2\pi n}(\frac{n}{e})^n}\left(\frac{n}{2}\right)^{n-d}\\
	\ge&2.5\left(\frac{(1-\delta)\delta^{\frac{\delta}{1-\delta}}}{2}n\right)^{n-d+1/2}.
\end{align*}


Setting $c_\delta=(1-\delta)\delta^{\frac{\delta}{1-\delta}}\max\{\delta^{\frac{\delta}{1-\delta}},\frac{1}{2}\}$, we obtain the bound $A(n,d)\ge 2.5(c_\delta n)^{n-d+1/2}$ when $d=\delta n$ with $\delta\in(0,1)$.

		\item When $n-d=o(n)$, since $\binom{n}{k}< (\frac{en}{k})^k$, we have
        \begin{align*}
			A(n,d)\ge& \frac{1}{\binom{n-1}{d-1}}(\frac{n}{2})^{n-d}\ge  \frac{1}{\binom{n}{n-d}}(\frac{n}{2})^{n-d}\\
			\ge& \frac{1}{(\frac{en}{n-d})^{n-d}}(\frac{n}{2})^{n-d}\\
            =& \left(\frac{n-d}{2e}\right)^{n-d}.
		\end{align*}
Especially taking $d=n-\frac{(k+1)\ln n}{\ln\ln n}$, we have
        \begin{align*}
        A(n,d)&\ge (\frac{n-d}{2e})^{n-d}= \left(\frac{(k+1)\ln n}{2e\ln\ln n}\right)^{\frac{(k+1)\ln n}{\ln\ln n}}= n^{\frac{1}{\ln n}\cdot \frac{(k+1)\ln n}{\ln\ln n}\cdot \ln(\frac{(k+1)\ln n}{2e\ln\ln n})}= n^{(k+1)(1-o(1))}\\
        &\ge \Omega(n^k).
        \end{align*}
	\end{enumerate}
\end{proof}

\begin{remark}

When $d=\delta n$ with $\delta\in(0,1)$, we see a gap between the upper bound $A(n,d)=O((C_\delta n)^{n-d})$ in {\hyperref[firstb]{Theorems \ref*{firstb}}} and \ref{upperbound_second} with $1/2 <C_\delta\leq1 $, and the lower bound $A(n,d)= \Omega((c_\delta n)^{n-d})$ in Corollary \autoref{corlow} with $ 0<c_\delta< 1$. See \autoref{UB_LB_C}.  It is not hard to find that, as $\delta\rightarrow0$, the limit inferior of $C_\delta/c_\delta$ approaches 1, suggesting that the gap is minimized for $\delta\rightarrow0$.
Specifically, when $\delta=1/2$, we have $\Omega((\frac{n}{8})^{n/2})\leq A(n,\frac{n}{2})\leq O((\frac{en}{4})^{n/2})$.

\begin{figure}[!htbp]
	\centering
	\includegraphics[scale=0.4]{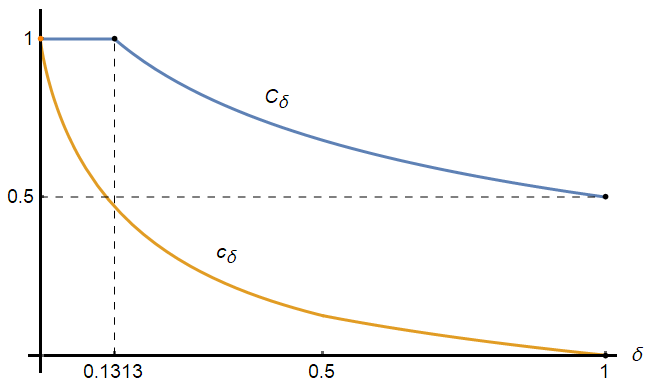}
	\caption{The values of $C_{\delta}$ and $c_{\delta}$ as functions of $\delta$}\label{UB_LB_C}
\end{figure}

	
	
\end{remark}


{}

\subsection{A construction of $(n,\frac{(n-3)(n-9)}{9},n-4)$ tree-codes}


Let $p$ be an odd prime, we will construct an $(n,\frac{(n-3)(n-9)}{9},n-4)$ tree-code  for $n=3p$, which consists of only Hamiltonian paths. 
 For convenience, a path with $n$ vertices is denoted by $i_1$-$i_2$-$\cdots$-$i_{n}$, where  $\{i_j,i_{j+1}\}$ is an edge for all $j\in [n-1]$.  

\begin{construction}\label{con_n-4}
Let $\bbF_p$ be a finite field of $p$ elements.
    Let $V_1,V_2$ and $ V_3$ be three disjoint sets such that $V=V_1\cup V_2\cup V_3$, where $V_i=\{(i,j):j\in \bbF_p\}$ for each $i=1,2,3$. Let $A= \{(a,b)\in (\mathbb{F}_p\backslash\{0\})^2:a\pm b\neq 0\}$, where $|A|=(p-1)(p-3)$. For each $(a,b)\in A$, clearly $a\mathbb{F}_p+b=\mathbb{F}_p$, hence $\{(i,aj+b):j\in \mathbb{F}_p\}$ is a permutation of $V_i$ for each $i=1,2,3$. For each $(a,b)\in A$, construct a Hamiltonian path $T_{a,b}$ over $V$ as
    \begin{align*}
T_{a,b}=& (1,0)\text{-}(2,a\cdot 0+b)\text{-}(3,b\cdot 0+a)\text{-}(1,1)\text{-}(2,a\cdot 1+b)\text{-}(3,b\cdot 1+a)\text{-}\cdots \\
&\text{-}(1,j)\text{-}(2,aj+b)\text{-}(3,bj+a)\text{-}\cdots\text{-}(1,p-1)\text{-}(2,a(p-1)+b)\text{-}(3,b(p-1)+a).
\end{align*}
Let  $\mathcal{C}_1=\{T_{a,b}:(a,b)\in A\}$.
\end{construction}

\begin{theorem}\label{thm_n-4}
    The code $\C_1$ in \sref{Construction}{con_n-4} is an $(n,\frac{(n-3)(n-9)}{9},n-4)$ tree-code.
\end{theorem}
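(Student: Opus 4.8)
The plan is to establish, in turn: (i) every $T_{a,b}$ is a Hamiltonian path on $V$, hence a spanning tree of $K_{n}$; (ii) the map $(a,b)\mapsto T_{a,b}$ is injective, so that $|\mathcal{C}_1|=|A|=(p-1)(p-3)=\frac{(n-3)(n-9)}{9}$; and (iii) any two distinct codewords share at most three common edges, giving $d(\mathcal{C}_1)\ge n-1-3=n-4$, with an explicit pair sharing exactly three edges to force equality.

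For (i): $T_{a,b}$ lists its vertices in consecutive length-three blocks $(1,j),(2,aj+b),(3,bj+a)$ for $j=0,1,\dots,p-1$. As $a\ne0$ and $b\ne0$, the affine maps $j\mapsto aj+b$ and $j\mapsto bj+a$ are bijections of $\mathbb{F}_p$, so each of $V_1,V_2,V_3$ is listed exactly once; consecutive vertices lie in distinct parts, hence are distinct. Thus $T_{a,b}$ is a Hamiltonian path, and its $3p-1=n-1$ edges fall into three classes according to which pair of parts they join: the $12$-edges $\{(1,j),(2,aj+b)\}$ ($j\in\mathbb{F}_p$), the $23$-edges $\{(2,aj+b),(3,bj+a)\}$ ($j\in\mathbb{F}_p$), and the $31$-edges $\{(3,bj+a),(1,j+1)\}$ ($0\le j\le p-2$). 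For (ii): in $T_{a,b}$ the endpoint $(1,0)$ has a unique neighbor $(2,b)$, which recovers $b$, and $(2,b)$ has a unique neighbor other than $(1,0)$, namely $(3,a)$, which recovers $a$; hence the edge set of $T_{a,b}$ determines $(a,b)$.

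For (iii), since $12$-, $23$- and $31$-edges join $V_1$ to $V_2$, $V_2$ to $V_3$, and $V_3$ to $V_1$ respectively, a common edge of $T_{a,b}$ and $T_{a',b'}$ is of the same class in both. A common $12$-edge forces $j=j'$ together with $(a-a')j=b'-b$; this has at most one solution $j\in\mathbb{F}_p$ (exactly one when $a\ne a'$, none when $a=a'$ because then $b\ne b'$), so there is at most one common $12$-edge, and an identical argument --- now also requiring $0\le j\le p-2$ --- gives at most one common $31$-edge. A common $23$-edge corresponds to a solution $(j,j')$ of the system $aj-a'j'=b'-b$, $bj-b'j'=a'-a$ over $\mathbb{F}_p$, whose coefficient determinant is $a'b-ab'$: when $a'b\ne ab'$ the solution is unique, while when $a'b=ab'$ we have $(a',b')=(\lambda a,\lambda b)$ for some $\lambda\ne0$ and the system is consistent only if $(\lambda-1)(a^2-b^2)=0$, which is impossible for distinct codewords since $a+b\ne0$ and $a-b\ne0$ force $a^2\ne b^2$. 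Hence at most one common $23$-edge too, so $|E(T_{a,b})\cap E(T_{a',b'})|\le3$ and $d(\mathcal{C}_1)\ge n-4$. For the reverse inequality, take $(a,b)=(1,2)$ and $(a',b')=(2,1)$, which lie in $A$ for every odd prime $p\ge5$; a direct check shows the three edges $\{(1,1),(2,3)\},\{(2,3),(3,3)\},\{(3,3),(1,2)\}$ belong to both $T_{1,2}$ and $T_{2,1}$, so this pair is at distance exactly $n-4$ and $d(\mathcal{C}_1)=n-4$. (When $p=3$ the set $A$ is empty and the statement is vacuous.)

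I expect the only genuinely delicate step to be the $23$-edge count: it is the one case that reduces to a $2\times2$ linear system over $\mathbb{F}_p$, and it is exactly here that the two restrictions $a\ne b$ and $a\ne -b$ defining $A$ are needed (the roles of $a,b\ne0$ having already been used to make $T_{a,b}$ a Hamiltonian path); everything else is bookkeeping about which block of the path an edge occupies.
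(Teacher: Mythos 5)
Your proof is correct and follows essentially the same route as the paper: classify common edges by which pair of parts $V_i,V_j$ they join, reduce each class to a linear equation or a $2\times 2$ system over $\mathbb{F}_p$, and use $a\pm b\neq 0$ to rule out the singular case for $E[V_2,V_3]$. You additionally spell out a few details the paper treats as immediate (that $T_{a,b}$ is a well-defined Hamiltonian path, injectivity of $(a,b)\mapsto T_{a,b}$, and an explicit pair realizing exactly three common edges), which is a useful tightening but not a different argument.
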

\begin{proof}
When $(a,b),(a',b')$ are two distinct elements in $A$, clearly $T_{a,b}$ and $T_{a',b'}$ are also distinct. So $\mathcal{C}_1$ is a tree-code over $n$ nodes of size $(p-1)(p-3)=\frac{(n-3)(n-9)}{9}$.

Now we show that the tree-distance of $\mathcal{C}_1$ is $n-4$.
In other words, any two distinct paths $T_{a,b}$ and $T_{a',b'}$ share at most three common edges. Let
$E[V_i,V_j]$ denote the set of all edges between $V_i$ and $V_j$. Then
 by \sref{Construction}{con_n-4}, each edge of $T_{a,b}$ comes from one of $E[V_1,V_2],E[V_2,V_3]$ and $E[V_3,V_1]$. If $T_{a,b}$ and $T_{a',b'}$ share a common edge from $E[V_1,V_2]$, let the edge be $(1,x)\text{-}(2,ax+b)$ in $T_{a,b}$ and $(1,x)\text{-}(2,a'x+b')$ in $T_{a',b'}$. Then $ax+b=a'x+b'$. When $a=a'$, it follows that $b=b'$, which contradicts $(a,b)\neq (a',b')$. Thus, $a\neq a'$, and we have $x=\frac{b'-b}{a-a'}$, which is a unique solution in $\mathbb{F}_p$. Therefore, $T_{a,b}$ and $T_{a',b'}$ can share at most one edge from $E[V_1,V_2]$. Similarly, they  share at most one edge from $E[V_3,V_1]$.

It is left to show that $T_{a,b}$ and $T_{a',b'}$ share at most one edge from $E[V_2,V_3]$. Suppose they have a common edge  from $E[V_2,V_3]$, denoted as $(2,ax+b)\text{-}(3,bx+a)$ in $T_{a,b}$ and $(2,a'y+b')\text{-}(3,b'y+a')$ in $T_{a',b'}$. Then we obtain a linear system of equations:
\begin{equation}\label{eq-P2P3}
\left\{
  \begin{array}{ll}
    ax+b=a'y+b',  \\
    bx+a=b'y+a'.
  \end{array}
\right.
\end{equation}
When the coefficient matrix $\big(\begin{smallmatrix}
a & a' \\
 b & b'
\end{smallmatrix}\big)$
 is non-singular, the linear system (\ref{eq-P2P3}) has a unique solution, and we are done. When $\big(\begin{smallmatrix}
a & a' \\
 b & b'
\end{smallmatrix}\big)$
 is singular, we can set $a'=ka,b'=kb$ for some $k\in\mathbb{F}_p$ with $k\neq 0,1$ and turn the linear system (\ref{eq-P2P3}) into
\[\left\{
  \begin{array}{ll}
    a(x-ky)=(k-1)b,  \\
    b(x-ky)=(k-1)a.
  \end{array}
\right.\]
By eliminating $(x-ky)$, We obtain $(k-1)(a+b)(a-b)=0$. However, this is not possible since $k\neq 1$ and $a\pm b\neq 0$. This completes the proof.
\end{proof}

%
%

\subsection{A construction of $(n,\frac{n(n-2)^2}{32},n-13)$ tree-codes}

Let $q=3^m$ for some positive integer $m$, we will construct an $(n,\frac{n(n-2)^2}{32},n-13)$ tree-code for $n=2q$. 

\begin{construction}\label{con_n-13}
    Let $\bbF_q$ be a finite field of $q=3^m$  elements with a primitive element  $\omega$.  Let $V_1,V_2$ be two disjoint sets and $V=V_1\cup V_2$, where $V_i=\{(i,j):j\in \bbF_q\}$ for each $i=1,2$. Define
       \[B= \{(\omega^i,\omega^j,c): i=2,4,\ldots,q-1; j=1,3,\ldots,q-2; c\in \mathbb{F}_q\}.\]
%
For each $(a,b,c)\in B$, $\frac{b}{a}$ is not a square of $\mathbb{F}_q$.  According to \cite{dickson1896analytic}, the polynomial $x^3-\frac{b}{a}x$ is a permutation polynomial over $\mathbb{F}_q$, then $ax^3-bx+c=a(x^3-\frac{b}{a}x)+c$ is also a permutation polynomial over $\mathbb{F}_q$.
Then for each $(a,b,c)\in B$, we construct a spanning tree $T_{a,b,c}$ over $V$ 
with a stem path
\begin{align*}
	&(1,1)\text{-}(2,a\cdot 1^3-b\cdot 1+c)\text{-}(1,\omega)\text{-}(2,a\omega^3-b\omega+c)\text{-}\cdots\\
	&\text{-}(1,\omega^j)\text{-}(2,a\omega^{3j}-b\omega^j+c)\text{-}\cdots\text{-}(1,\omega^{q-2})\text{-}(2,a\omega^{3(q-2)}-b\omega^{q-2}+c)
\end{align*}
and a branch $(1,a)\text{-}(1,0)\text{-}(2,c)$. 

Let $\mathcal{C}_2=\{T_{a,b,c}:(a,b,c)\in B\}$.
\end{construction}

\begin{theorem}\label{thm_n-13}
    The code $\C_2$ in \sref{Construction}{con_n-13} is an $(n,\frac{n(n-2)^2}{32},n-13)$ tree-code.
\end{theorem}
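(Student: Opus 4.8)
The plan is to mirror the structure of the proof of \autoref{thm_n-4}. First I would check that the map $(a,b,c)\mapsto T_{a,b,c}$ is injective, so that $|\mathcal{C}_2|=|B|=\frac{q-1}{2}\cdot\frac{q-1}{2}\cdot q=\frac{q(q-1)^2}{4}=\frac{n(n-2)^2}{32}$, using $n=2q$. Injectivity is easy: the stem path together with the branch $(1,a)\text{-}(1,0)\text{-}(2,c)$ determines the vertex adjacent to $(1,0)$ inside $V_1$, which is $(1,a)$, and the vertex in $V_2$ adjacent to $(1,0)$, which is $(2,c)$; once $a$ and $c$ are recovered, reading off any edge $(1,\omega^j)\text{-}(2,a\omega^{3j}-b\omega^j+c)$ with $\omega^j\neq 0$ recovers $b$ since $\omega^j$ is invertible. (One also needs to verify $T_{a,b,c}$ really is a spanning tree: the stem path visits every vertex of $V_1\setminus\{(1,0)\}$ in $V_1$ exactly once and, because $ax^3-bx+c$ is a permutation polynomial, every vertex of $V_2$ exactly once; attaching the pendant edges $(1,0)\text{-}(1,a)$ and $(1,0)\text{-}(2,c)$ adds the last vertex $(1,0)$ and keeps it connected and acyclic, giving $n-1$ edges on $n$ vertices.)

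The core of the argument is to bound the number of common edges between two distinct codewords $T_{a,b,c}$ and $T_{a',b',c'}$ by $12$, i.e.\ to show the tree-distance is at least $(n-1)-12=n-13$. As in \autoref{thm_n-4} I would classify the edges of $T_{a,b,c}$ by which vertex-classes they join: edges inside $V_1$ (namely $(1,0)\text{-}(1,a)$), and edges in $E[V_1,V_2]$ (the stem edges plus $(1,0)\text{-}(2,c)$). First handle the $V_1$-internal edges: each codeword has exactly one such edge, $(1,0)\text{-}(1,a)$, so two codewords share at most one edge of this type. For the $E[V_1,V_2]$ edges, a shared edge $(1,x)\text{-}(2,y)$ forces
\[
ax^3-bx+c = a'x^3-b'x+c',
\]
i.e.\ $x$ is a root of the polynomial $g(x)=(a-a')x^3-(b-b')x+(c-c')$ over $\mathbb{F}_q$. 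If $g$ is not identically zero it has at most $3$ roots, so at most $3$ shared stem-type edges; together with the at-most-one pendant edge $(1,0)\text{-}(2,c)$ matching, and the at-most-one $V_1$-internal edge, this is comfortably under $12$ — so the only real danger is $g\equiv 0$, i.e.\ $a=a'$, $b=b'$, $c=c'$, which is excluded. The factor $12$ (rather than something like $6$) presumably accounts for the fact that the \emph{same} edge of $K_n$ can be realized in a codeword in more than one way when the endpoint labels coincide with the special vertices $(1,0),(1,a),(2,c)$, so I would carefully enumerate the degenerate cases where a stem edge of one codeword coincides, as an unordered edge of $K_n$, with the branch edge $(1,0)\text{-}(1,a')$ or $(1,0)\text{-}(2,c')$ of the other; each such coincidence contributes a bounded (constant) number of extra possible common edges, and summing the worst cases gives the bound $12$.

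I expect the main obstacle to be exactly this bookkeeping of degenerate coincidences: unlike the clean three-part split in \autoref{thm_n-4}, here the branch creates edges that can overlap with the stem in ways depending on whether $a,c$ are $0$, whether $a=a'$, and whether the special labels collide, and one must show every such exceptional alignment still yields only finitely (indeed, at most a specified constant) many common edges, so that the total never exceeds $12$. A secondary point to get right is the justification, via \cite{dickson1896analytic}, that $x^3-\tfrac{b}{a}x$ permutes $\mathbb{F}_q$ precisely when $\tfrac{b}{a}$ is a non-square in $\mathbb{F}_{3^m}$ — this is where the choice of exponents ($i$ even, $j$ odd, so that $b/a=\omega^{j-i}$ with $j-i$ odd hence $b/a$ a non-square) is used, and it is the reason $q$ must be a power of $3$. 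Once these are in hand, the distance bound follows and, with $n=2q$, $A(n,n-13)\ge |\mathcal{C}_2|=\frac{n(n-2)^2}{32}=\Omega(n^3)$.
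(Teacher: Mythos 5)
The central gap is in your treatment of common edges in $E[V_1,V_2]$, and it is precisely what prevents your count from reaching $12$. You assume a shared edge $(1,x)\text{-}(2,y)$ forces $ax^3-bx+c = a'x^3-b'x+c'$, but this only holds when the edge is the ``forward'' stem edge out of $(1,x)$ in \emph{both} trees. Each internal $V_2$-vertex $(2,a\omega^{3j}-b\omega^j+c)$ on the stem is adjacent to two vertices of $V_1$, namely $(1,\omega^j)$ (forward) and $(1,\omega^{j+1})$ (backward), so a stem edge $(1,u)\text{-}(2,v)$ of $T_{a,b,c}$ satisfies either $v=au^3-bu+c$ or $v=a(u/\omega)^3-b(u/\omega)+c$. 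A common edge of two codewords can therefore be forward--forward, forward--backward, backward--forward, or backward--backward, and these give four \emph{distinct} cubic equations over $\bbF_q$, each contributing at most three solutions; the worst case $a\neq a'$, $c\neq c'$ (which kills both branch-edge contributions) then gives $3+3+3+3=12$ common edges. Your enumeration yields at most $3+1+1=5$, which would prove $d\ge n-6$ --- stronger than claimed and not what the construction achieves --- and your speculation that the slack up to $12$ comes from ``degenerate coincidences'' at $(1,0),(1,a),(2,c)$ is a red herring: those branch edges have $(1,0)$ as an endpoint, and no stem edge ever touches $(1,0)$, so branch and stem contributions never overlap.

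The cross-type cubics also invoke the parity of exponents a second time, which your sketch does not register. After substituting $x=\omega y$, the forward--backward equation has leading coefficient $a'-a\omega^3$, and one needs this to be nonzero for the equation to be genuinely cubic; since $a,a'$ are even powers of $\omega$ and $\omega^3$ is odd, $a'\neq a\omega^3$ always. So the parity choices in $B$ do double duty: they make $b/a$ a non-square (Dickson's criterion) \emph{and} make the cross-type cubics non-degenerate. One smaller correction: $(2,c)$ is precisely the element of $V_2$ \emph{not} visited by the stem, since $\{ax^3-bx+c : x\neq 0\} = \bbF_q\setminus\{c\}$ for a permutation polynomial, so the branch appends \emph{two} new vertices $(1,0)$ and $(2,c)$ through the single attachment point $(1,a)$; this, rather than ``adding the last vertex $(1,0)$,'' is why no cycle is created.
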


\begin{proof} Note that $|B|=(\frac{q-1}{2})^2q=\frac{n(n-2)^2}{32}$. It  suffices to show that, for any two distinct $(a,b,c),(a',b',c')\in B$,  $T_{a,b,c}$ and $T_{a',b',c'}$ share at most twelve common edges.


%

By \sref{Construction}{con_n-13},  the tree $T_{a,b,c}$ consists of three types of edges:
\begin{center}
 $(1,x)\text{-}(2,ax^3-bx+c)$ for $x\neq 0$;~~~
 $(1,\omega x)\text{-}(2,ax^3-bx+c)$ for $x\neq 0,\omega^{q-2}$;~~~
 $(2,c)\text{-}(1,0)\text{-}(1,a)$.
\end{center}
Therefore, there are six cases for possible common edges of $T_{a,b,c}$ and $T_{a',b',c'}$.
\begin{description}
  \item[(1)] A common edge is $(1,x)\text{-}(2,ax^3-bx+c)$ in $T_{a,b,c}$ and $(1,x)\text{-}(2,a'x^3-b'x+c')$ in $T_{a',b',c'}$. Then $ax^3-bx+c=a'x^3-b'x+c'~(x\neq 0)$. When $a\neq a'$ and $c\neq c'$, it is a cubic congruence equation and we obtain at most three solutions, i.e., at most three common edges; otherwise, we easily obtain at most two common edges.
  \item[(2)] A common edge is $(1,\omega x)\text{-}(2,ax^3-bx+c)$ in $T_{a,b,c}$ and $(1,\omega x)\text{-}(2,a'x^3-b'x+c')$ in $T_{a',b',c'}$.
Then $ax^3-bx+c=a'x^3-b'x+c'~(x\neq 0,\omega^{q-2})$. The same as (1), when $a\neq a'$ and $c\neq c'$, it is a cubic congruence equation and we obtain at most three solutions, i.e., at most three common edges; otherwise, we easily obtain at most two common edges.
  \item[(3)] A common edge is $(1,x)\text{-}(2,ax^3-bx+c)$ in $T_{a,b,c}$ and $(1,\omega y)\text{-}(2,a'y^3-b'y+c')$ in $T_{a',b',c'}$.
Then $x=\omega y$ and thus $ax^3-bx+c=a\omega^3y^3-b\omega y+c=a'y^3-b'y+c'~(y\neq 0,\omega^{q-2})$, that is,
\[(a'-a\omega^3)y^3+ (b'-b\omega)y+ (c'-c)= 0~(y\neq 0,\omega^{q-2}).\]
Since $\frac{a'}{a}=\omega^{i_1-i_2}$ with $i_1,i_2$ even, we have $a'\neq a\omega^3$, we obtain a cubic congruence equation with at most three solutions, i.e., at most three common edges.
  \item[(4)] A common edge is $(1,\omega x)\text{-}(2,ax^3-bx+c)$ in $T_{a,b,c}$ and $(1,y)\text{-}(2,a'y^3-b'y+c')$ in $T_{a',b',c'}$.
The same as (3), we obtain a cubic congruence equation with at most three solutions, i.e., at most three common edges.
  \item[(5)] A common edge is $(1,0)\text{-}(1,a)$ in $T_{a,b,c}$ and $(1,0)\text{-}(1,a')$ in $T_{a',b',c'}$. The common edge exists when $a=a'$.
  \item[(6)] A common edge is $(1,0)\text{-}(2,c)$ in $T_{a,b,c}$ and $(1,0)\text{-}(2,c')$ in $T_{a',b',c'}$. The common edge exists when $c=c'$.
\end{description}
Combining the above cases, two distinct trees $T_{a,b,c}$ and $T_{a',b',c'}$ indeed have at most twelve common edges. This completes the proof.
\end{proof}

%

\subsection{Two small constructions for $d=n-3$}\label{smlcnst}
In \autoref{thm_iii.4} (1), we obtain that $A(n,n-3)\leq \frac{1}{2}n(n+2)$ for $n\geq 13$. In fact,  we guess that $A(n,n-3)\geq \frac{1}{2}n^2+cn$ for some constant $c$.  Here,  we give two constructions of tree-codes with $n=8$ and $n=11$ to support this guess.

\vspace{0.3cm}
\subsubsection{A construction of $(8,28,5)$ code}

The construction relies on balanced incomplete block designs (BIBDs).

    Let $v,k,\lambda$ be positive integers such that $v>k\geq 2$. A $(v,k,\lambda)$-BIBD is a pair $(X,\B)$, where $X$ is a finite set of points and $\B$ is a set of $k$-subsets of $X$, called \emph{blocks}, such that every pair of distinct points is contained in exactly $\lambda$ blocks. The following is a $(7,3,1)$-BIBD with $X=[7]$
and \begin{equation}\label{eqbibd}
      \mathcal{B}=\{\{4,5,6\}, \{1,6,7\}, \{1,2,4\}, \{2,5,7\}, \{2,3,6\}, \{3,4,7\}, \{1,3,5\}\}.
    \end{equation}


Next, we construct a table of size $8\times 8$ with each entry a block of size three
except the diagonal entries, which are empty, such that the seven blocks in the $i$th row (resp.\ the $j$th column) form a $(7,3,1)$-BIBD with point set $[8]\setminus \{i\}$ (resp.\ $[8]\setminus \{j\}$). See \autoref{tablei}. In fact, each row is obtained by reordering the blocks in (\ref{eqbibd}), and carefully mapping the points in $[7]$ to $[8]\setminus \{i\}$. For readers' convenience, in \autoref{tablep}, we list  all the mappings used to get \autoref{tablei}.  

\begin{table}[!ht]
	\centering
\caption{}\label{tablep}
	\begin{tabular}{|c|c|c|c|c|c|c|c|c|}
		\hline
		~ & 1 & 2 & 3 & 4 & 5 & 6 & 7 & 8 \\ \hline
		row & $\begin{matrix}
			1\rightarrow8\\
			6\rightarrow7\\
			7\rightarrow6
		\end{matrix}$
		&
		$\begin{matrix}
			2\rightarrow8\\
			5\rightarrow7\\
			7\rightarrow5
		\end{matrix}$&
		$\begin{matrix}
			3\rightarrow8\\
			6\rightarrow2\\
			2\rightarrow6
		\end{matrix}$ &
		$\begin{matrix}
			4\rightarrow8\\
			5\rightarrow6\\
			6\rightarrow5
		\end{matrix}$ &
		$\begin{matrix}
			5\rightarrow8\\
			4\rightarrow6\\
			6\rightarrow4
		\end{matrix}$ &
		$\begin{matrix}
			6\rightarrow8\\
			4\rightarrow5\\
			5\rightarrow4
		\end{matrix}$ &
		$\begin{matrix}
			7\rightarrow8\\
			1\rightarrow6\\
			6\rightarrow1
		\end{matrix}$ &
		id \\ \hline
		column &
		$\begin{matrix}
			1\rightarrow3\\
			3\rightarrow7\\
			7\rightarrow8
		\end{matrix}$ &
		$\begin{matrix}
			2\rightarrow3\\
			3\rightarrow5\\
			5\rightarrow8
		\end{matrix}$ &
		$\begin{matrix}
			3\rightarrow6\\
			5\rightarrow8\\
			6\rightarrow5
		\end{matrix}$
		&
		$\begin{matrix}
			1\rightarrow3\\
			3\rightarrow8\\
			4\rightarrow1
		\end{matrix}$  &
		$\begin{matrix}
			1\rightarrow8\\
			4\rightarrow1\\
			5\rightarrow4
		\end{matrix}$ &
		$\begin{matrix}
			1\rightarrow8\\
			5\rightarrow1\\
			6\rightarrow5
		\end{matrix}$
		&
		$\begin{matrix}
			2\rightarrow8\\
			6\rightarrow2\\
			7\rightarrow6
		\end{matrix}$ &
		$\begin{matrix}
			1\rightarrow6\\
			5\rightarrow1\\
			6\rightarrow7\\
			7\rightarrow5
		\end{matrix}$ \\ \hline
	\end{tabular}
\end{table}






\begin{table}[!htbp]
	\centering
	\caption{}\label{tablei}
	\begin{tabular}{|c|c|c|c|c|c|c|c|c|}
		\hline
		$N_{i,j}$ & 1 & 2 & 3 & 4 & 5 & 6 & 7 & 8  \\ \hline
		1 & / & \{4,5,7\} & \{2,5,6\} & \{3,5,8\} & \{6,7,8\} & \{2,4,8\} & \{3,4,6\} & \{2,3,7\}  \\ \hline
		2 & \{3,6,8\} & / & \{4,6,7\} & \{1,5,6\} & \{1,3,7\} & \{5,7,8\} & \{1,4,8\} & \{3,4,5\}  \\ \hline
		3 & \{4,7,8\} & \{1,5,8\} & / & \{2,6,8\} & \{1,4,6\} & \{1,2,7\} & \{2,4,5\} & \{5,6,7\}  \\ \hline
		4 & \{2,6,7\} & \{3,7,8\} & \{1,5,7\} & / & \{1,2,8\} & \{2,3,5\} & \{5,6,8\} & \{1,3,6\}  \\ \hline
		5 & \{2,3,4\} & \{4,6,8\} & \{2,7,8\} & \{3,6,7\} & / & \{1,3,8\} & \{1,2,6\} & \{1,4,7\}  \\ \hline
		6 & \{3,5,7\} & \{1,3,4\} & \{4,5,8\} & \{1,7,8\} & \{2,4,7\} & / & \{2,3,8\} & \{1,2,5\}  \\ \hline
		7 & \{2,5,8\} & \{3,5,6\} & \{1,6,8\} & \{1,2,3\} & \{3,4,8\} & \{1,4,5\} & / & \{2,4,6\}  \\ \hline
		8 & \{4,5,6\} & \{1,6,7\} & \{1,2,4\} & \{2,5,7\} & \{2,3,6\} & \{3,4,7\} & \{1,3,5\} & /  \\
		\hline
	\end{tabular}
	
\end{table}

Denote the block in the $(i,j)$th entry of \autoref{tablei} by $N_{i,j}$. Then it is easy to check that these $N_{i,j}$ with $i\neq j$ have the following  properties: 
	\begin{itemize}
			\item[(P1)] For each $i\neq j$,  $N_{i,j}\cap N_{j,i}=\emptyset$ and $N_{i,j}\cup N_{j,i}=[8]\backslash\{i,j\}$.
		\item[(P2)] 
In each row $i$, $|N_{i,j}\cap N_{i,j'}|=1$ for every $j\neq j'\in [8]\setminus\{i\}$.  It is the same for each column. 
		\item[(P3)] In each row $i$, for every $j\neq j'\in [8]\setminus\{i\}$,  $j'\in N_{i,j}$ if and only if $j\notin N_{i,j'}$. It is the same for each column.
	\end{itemize}

Based on \autoref{tablei}, we give our construction.

\begin{figure}[!htbp]
	\centering
	\includegraphics[scale=0.2]{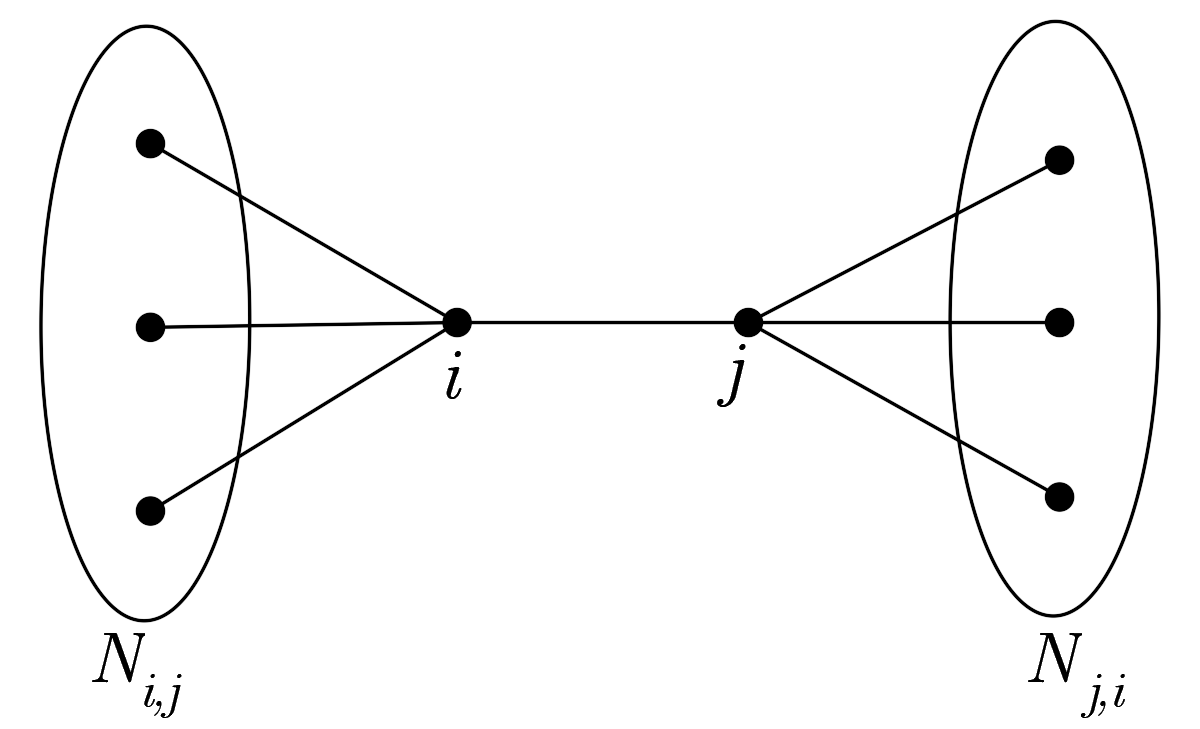}
	\caption{$S_{\{i,j\}}$: a star with two centers.}\label{figstar}
\end{figure}


\begin{construction}\label{con_n-14}For each pair $\{i,j\}\subset [8]$, let $S_{\{i,j\}}$ be the tree with vertices $i$ and $j$ connected,  each of which is further connected to vertices in $N_{i,j}$ and $N_{j,i}$, respectively. Let $\mathcal{C}_3=\{S_{\{i,j\}}:\{i,j\}\subset [8]\}$. The tree is well defined by (P1). See \autoref{figstar}, where each $S_{\{i,j\}}$ looks like a star with two centers.

\end{construction}



\begin{theorem}
	The code $\mathcal{C}_3$ in \sref{Construction}{con_n-14} is an $(8,28,5)$ tree-code.
\end{theorem}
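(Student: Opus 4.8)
The plan is to establish three facts: (a) each $S_{\{i,j\}}$ is a spanning tree of $K_8$; (b) the $28$ trees $S_{\{i,j\}}$ are pairwise distinct; and (c) any two distinct codewords share at most two edges, with two edges shared for some pair. Since the tree-distance equals $8-1-(\text{number of common edges})$, (c) gives $d(\mathcal{C}_3)=5$, and together with (a)--(b) this proves $\mathcal{C}_3$ is an $(8,28,5)$ tree-code.

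For (a) and (b): by (P1) the sets $N_{i,j}$ and $N_{j,i}$ partition $[8]\setminus\{i,j\}$, each into $3$ elements, so $S_{\{i,j\}}$ has edge set $\{\{i,j\}\}\cup\{\{v,i\}:v\in N_{i,j}\}\cup\{\{v,j\}:v\in N_{j,i}\}$, which is $7$ edges spanning all $8$ vertices and clearly connected, hence a spanning tree. Its degree sequence is $(4,4,1,\dots,1)$ and the two degree-$4$ vertices are exactly $i$ and $j$, so $\{i,j\}$ is recovered from $S_{\{i,j\}}$; thus the assignment is injective and $|\mathcal{C}_3|=\binom{8}{2}=28$. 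I would also record the single structural fact driving the rest: every edge of $S_{\{i,j\}}$ is incident with $i$ or with $j$.

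For (c), take two distinct pairs and split on $t=|\{i,j\}\cap\{k,l\}|$. If $t=1$, say $k=i$ and $j\neq l$, then every common edge of $S_{\{i,j\}}$ and $S_{\{i,l\}}$ is incident with $i$; running through the possibilities one finds: by (P3) in row $i$ exactly one of $\{i,j\},\{i,l\}$ is common; the edge $\{j,l\}$ is never common (combine (P1) for the pairs $\{i,j\},\{i,l\}$ with (P3) in row $i$); and by (P2) in row $i$ there is exactly one $v\in N_{i,j}\cap N_{i,l}$, necessarily outside $\{i,j,l\}$, giving the one further common edge $\{v,i\}$ — a total of exactly $2$. If $t=0$, every common edge has one endpoint in $\{i,j\}$ and one in $\{k,l\}$, so it lies among the four edges $\{i,k\},\{i,l\},\{j,k\},\{j,l\}$; writing $\alpha=[k\in N_{i,j}]$, $\beta=[l\in N_{i,j}]$, $\gamma=[i\in N_{k,l}]$, $\delta=[j\in N_{k,l}]\in\{0,1\}$ (all complementary memberships being forced by (P1)), these four edges are common exactly when $\alpha\gamma$, $\beta(1-\gamma)$, $(1-\alpha)\delta$, $(1-\beta)(1-\delta)$ equal $1$ respectively, and grouping as $[\alpha\gamma+(1-\alpha)\delta]+[\beta(1-\gamma)+(1-\beta)(1-\delta)]$ shows the sum is at most $1+1=2$.

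Hence any two codewords share at most two edges, so $d(\mathcal{C}_3)\ge 5$; and since overlapping pairs (which exist, e.g.\ $\{1,2\}$ and $\{1,3\}$) share exactly two edges, $d(\mathcal{C}_3)=5$. The main obstacle is the bookkeeping in the case $t=1$: one must be sure the two common edges produced there are genuinely distinct, that no candidate is double-counted, and that the exclusions (such as $\{j,l\}$) really are consequences of (P1)--(P3); the case $t=0$ is a short finite check once the ``$K_{2,2}$ between $\{i,j\}$ and $\{k,l\}$'' observation is made.
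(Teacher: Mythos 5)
Your proof is correct and follows essentially the same route as the paper: same decomposition into the cases $|\{i,j\}\cap\{i',j'\}|=1$ and $=0$, and the same exploitation of (P1)--(P3). The one small stylistic difference is in the disjoint case, where the paper's argument is slicker than your $K_{2,2}$ indicator bookkeeping: it simply observes that every edge of $S_{\{i,j\}}$ is incident to $i$ or $j$, while in $S_{\{i',j'\}}$ the vertices $i$ and $j$ are leaves and hence each touches exactly one edge, giving at most two common edges immediately.
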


\begin{proof} It is easy to see that for $\{i,j\}\neq \{i',j'\}$, $S_{\{i,j\}}\neq S_{\{i',j'\}}$. So $|\mathcal{C}_3|=\binom{8}{2}=28$. It is left to show that $S_{\{i,j\}}$ and $ S_{\{i',j'\}}$ share at most two edges.  We prove in two cases.
	\begin{enumerate}
		\item $|\{i,j\}\cap\{i',j'\}|=1$.
		Suppose $i=i'$ and $j\neq j'$. We split edges of each $S_{\{i,j\}}$ into two stars, the left star centered at $i$ with four leaves in $\{j\}\cup N_{i,j}$, and the right one centered at $j$ with three leaves in $N_{j,i}$. The two right stars of  $S_{\{i,j\}}$ and $S_{\{i,j'\}}$ have no common edge since $j'\in N_{j,i}$ if and only if $j\notin N_{j',i}$ by (P3). If the two stars are from different sides, they clearly have no common edge. It is left to check  the two left stars in $S_{\{i,j\}}$ and $S_{\{i,j'\}}$ both centered at $i$. By (P2), $|N_{i,j}\cap N_{i,j'}|=1$, and by (P3), $j'\in N_{i,j}$ if and only if $j\notin N_{i,j'}$. So $|(\{j\}\cup N_{ij})\cap (\{j'\}\cup N_{ij'})|\leq 2$. In this case, $S_{\{i,j\}}$ and $ S_{\{i',j'\}}$ share at most two edges.

%
		
		\item $|\{i,j\}\cap\{i',j'\}|=0$. Note that each edge in $S_{\{i,j\}}$ is incident to either $i$ or $j$.
However, there is exactly one edge incident to each $i$ and $j$ in $ S_{\{i',j'\}}$. 	So	$S_{\{i,j\}}$ and $ S_{\{i',j'\}}$ share at most two edges.
	\end{enumerate}
\end{proof}

Although we have successfully constructed an $(8,28,5)$ tree-code by utilizing the balanced properties of a $(7,3,1)$-BIBD, which has a very strong structure, it seems impossible to extend this method to large $n$ for a code with tree-distance $n-3$.

%

\vspace{0.3cm}

\subsubsection{A construction of $(11,35,8)$ tree-code} The construction is inspired by F\"{u}redi's algebraic construction \cite{furedi1996new}.
Given a prime number $p\ge5$, let $I_p$ be the set of representatives of $\{\{\omega,\omega^{-1}\}: \omega\in \bbZ_p\backslash\{0, 1,-1\}\}$. Then $|I_p|=(p-3)/2$.

%
%

For each $a\in \bbZ_p, b\in I_p$, define a graph $G_{a,b}$ with vertex set $\bbZ_p$ and edge set
\[E(G_{a,b})=\{\{x,y\}\subset \bbZ_p: y= bx+a\}.\]
For each $a\in [\frac{p-1}{2}]\subset \bbZ_p$, define $G_{a,1}$ over $\bbZ_p$ with
\[E(G_{a,1})=\{\{x,x+a\}:x\in \bbZ_p\setminus\{p-1\}\}.\]
Let $\mathcal{G}=\{G_{a,b}: a\in \bbZ_p, b\in I_p\}\cup \{G_{a,1}: a\in [\frac{p-1}{2}]\}$, then $|\mathcal{G}|=\frac{p^2-2p-1}{2}$. It can be verified that each $G_{a,b}\in \mathcal{G}$  has exactly  $p-1$ edges, and two different $G_{a,b}$'s share at most two common edges. However, each graph $G_{a,b}$ may not be a tree, since it may contain several disjoint cycles. Further,  for almost all edges $\{x,y\}\subset \bbZ_p$, there are $p-2$ subgraphs in $\mathcal{G}$ containing $\{x,y\}$, which is applicable to the final remark in \autoref{upperbound}.

For our purpose, we choose a permutation on $E(K_p)$ to map each $G_{a,b}$ to a new graph $G'_{a,b}$, which also has $p-1$ edges apparently, and hopefully the new graph is a spanning tree. Note that the permutation preserves the intersection size of two $G_{a,b}$'s. So we can collect all trees $G'_{a,b}$ after the permutation to get a tree-code of distance $p-3$.


Specially for $p=11$, by the assistance of computer,  we have found an edge permutation $\sigma$ of $E(K_{11})$ such that among the $49$ graphs $G'_{a,b}$, $35$ of them are trees. Thus we obtain an $(11,35,8)$ tree-code. See \autoref{tabii} for the list of $G'_{a,b}$'s which are trees. The permutation we used can be described as $\sigma=\sigma_1\sigma_2\sigma_3\sigma_4\sigma_5\sigma_6\sigma_7$, where $\sigma_1$ is a 10-cycle $\{2,3\}\rightarrow\{0,9\}\rightarrow\{2,8\}\rightarrow\{0,8\}\rightarrow\{3,6\}\rightarrow\{2,4\}\rightarrow\{1,8\}\rightarrow\{7,8\}\rightarrow\{0,6\}\rightarrow\{0,1\}\rightarrow\{2,3\}$, $\sigma_2$ and $\sigma_3$ are 3-cycles $\{4,6\}\rightarrow\{1,10\}\rightarrow\{8,10\}\rightarrow\{4,6\}$ and $\{4,9\}\rightarrow\{6,10\}\rightarrow\{5,10\}\rightarrow\{4,9\}$, $\sigma_4$ to $\sigma_7$ are transpositions $\{0,10\}\leftrightarrow\{4,8\},\{1,5\}\leftrightarrow\{2,9\},\{3,8\}\leftrightarrow\{4,5\},\{0,2\}\leftrightarrow\{0,3\}$.
\begin{table}[!ht]
	\centering
\caption{}\label{tabii}
	\begin{tabular}{|c|c|c|c|}
		\hline
		$\#$ & a & b & The edge set of $G'_{a,b}$ \\ \hline
		1 & 1 & 1 & \{\{2, 3\}, \{1, 2\}, \{0, 9\}, \{3, 4\}, \{3, 8\}, \{5, 6\}, \{6, 7\}, \{0, 6\}, \{8, 9\}, \{9, 10\}\} \\ \hline
		2 & 1 & 2 & \{\{0, 3\}, \{1, 3\}, \{1, 8\}, \{3, 5\}, \{1, 10\}, \{5, 7\}, \{6, 8\}, \{7, 9\}, \{4, 6\}, \{2, 8\}\} \\ \hline	
		3 & 1 & 3 & \{\{0, 2\}, \{1, 4\}, \{2, 5\}, \{2, 4\}, \{4, 7\}, \{5, 8\}, \{6, 9\}, \{7, 10\}, \{3, 6\}, \{1, 9\}\} \\ \hline
		4 & 1 & 4 & \{\{0, 4\}, \{2, 9\}, \{2, 6\}, \{3, 7\}, \{0, 10\}, \{5, 9\}, \{5, 10\}, \{0, 7\}, \{7, 8\}, \{1, 5\}\} \\ \hline	
		5 & 1 & 5 & \{\{0, 5\}, \{1, 6\}, \{2, 7\}, \{4, 5\}, \{6, 10\}, \{4, 9\}, \{0, 1\}, \{1, 7\}, \{0, 8\}, \{3, 9\}\} \\ \hline
		6 & 0 & 2 & \{\{1, 2\}, \{1, 8\}, \{2, 4\}, \{0, 10\}, \{4, 9\}, \{1, 6\}, \{3, 7\}, \{5, 8\}, \{7, 9\}, \{9, 10\}\} \\ \hline
		7 & 1 & 2 & \{\{2, 3\}, \{1, 3\}, \{2, 5\}, \{3, 7\}, \{6, 10\}, \{0, 5\}, \{2, 6\}, \{4, 7\}, \{6, 8\}, \{8, 9\}\} \\ \hline
		8 & 2 & 2 & \{\{0, 3\}, \{1, 4\}, \{2, 6\}, \{4, 5\}, \{4, 10\}, \{2, 9\}, \{2, 4\}, \{5, 7\}, \{0, 6\}, \{4, 8\}\} \\ \hline
		9 & 3 & 2 & \{\{0, 2\}, \{2, 9\}, \{2, 7\}, \{3, 9\}, \{0, 4\}, \{2, 5\}, \{1, 10\}, \{6, 7\}, \{9, 10\}, \{8, 10\}\} \\ \hline
		10 & 5 & 2 & \{\{0, 5\}, \{1, 7\}, \{1, 5\}, \{0, 2\}, \{1, 8\}, \{3, 8\}, \{0, 6\}, \{4, 6\}, \{1, 9\}, \{3, 10\}\} \\ \hline
		11 & 6 & 2 & \{\{0, 1\}, \{7, 8\}, \{2, 10\}, \{1, 3\}, \{3, 4\}, \{6, 7\}, \{7, 9\}, \{3, 6\}, \{1, 5\}, \{4, 10\}\} \\ \hline
		12 & 10 & 2 & \{\{4, 8\}, \{0, 9\}, \{3, 5\}, \{4, 7\}, \{5, 9\}, \{0, 1\}, \{2, 7\}, \{0, 10\}, \{6, 9\}, \{4, 6\}\} \\ \hline
		13 & 0 & 3 & \{\{1, 3\}, \{2, 6\}, \{3, 9\}, \{1, 4\}, \{3, 8\}, \{6, 7\}, \{7, 10\}, \{0, 8\}, \{5, 9\}, \{4, 6\}\} \\ \hline
		14 & 1 & 3 & \{\{2, 3\}, \{1, 4\}, \{2, 7\}, \{3, 10\}, \{1, 8\}, \{6, 8\}, \{0, 7\}, \{4, 5\}, \{6, 9\}, \{9, 10\}\} \\ \hline
		15 & 2 & 3 & \{\{0, 3\}, \{2, 9\}, \{0, 8\}, \{0, 2\}, \{3, 4\}, \{5, 6\}, \{6, 9\}, \{1, 7\}, \{0, 10\}, \{7, 9\}\} \\ \hline
		16 & 3 & 3 & \{\{0, 2\}, \{1, 6\}, \{1, 5\}, \{1, 3\}, \{5, 7\}, \{5, 10\}, \{2, 7\}, \{5, 8\}, \{8, 9\}, \{4, 8\}\} \\ \hline
		17 & 4 & 3 & \{\{0, 4\}, \{1, 7\}, \{2, 10\}, \{0, 9\}, \{3, 8\}, \{5, 8\}, \{0, 1\}, \{3, 7\}, \{6, 8\}, \{8, 10\}\} \\ \hline
		18 & 6 & 3 & \{\{0, 1\}, \{1, 9\}, \{1, 2\}, \{3, 4\}, \{4, 7\}, \{4, 9\}, \{2, 6\}, \{5, 7\}, \{2, 8\}, \{3, 10\}\} \\ \hline
		19 & 7 & 3 & \{\{0, 7\}, \{8, 10\}, \{3, 5\}, \{0, 10\}, \{0, 5\}, \{2, 4\}, \{6, 7\}, \{8, 9\}, \{1, 9\}, \{4, 10\}\} \\ \hline
		20 & 9 & 3 & \{\{2, 8\}, \{1, 8\}, \{3, 7\}, \{4, 10\}, \{2, 5\}, \{5, 6\}, \{0, 6\}, \{3, 6\}, \{3, 9\}, \{5, 10\}\} \\ \hline
		21 & 10 & 3 & \{\{4, 8\}, \{1, 2\}, \{2, 5\}, \{4, 5\}, \{0, 4\}, \{3, 5\}, \{7, 9\}, \{7, 8\}, \{6, 10\}, \{7, 10\}\} \\ \hline
		22 & 0 & 5 & \{\{2, 9\}, \{2, 10\}, \{3, 4\}, \{6, 10\}, \{3, 5\}, \{6, 8\}, \{2, 7\}, \{0, 6\}, \{1, 9\}, \{5, 10\}\} \\ \hline
		23 & 1 & 5 & \{\{2, 3\}, \{1, 6\}, \{0, 3\}, \{3, 5\}, \{4, 10\}, \{3, 8\}, \{6, 9\}, \{3, 7\}, \{1, 5\}, \{7, 10\}\} \\ \hline
		24 & 3 & 5 & \{\{0, 2\}, \{7, 8\}, \{3, 7\}, \{1, 4\}, \{5, 6\}, \{0, 1\}, \{5, 7\}, \{4, 6\}, \{6, 10\}, \{9, 10\}\} \\ \hline
		25 & 5 & 5 & \{\{0, 5\}, \{8, 10\}, \{1, 8\}, \{3, 9\}, \{3, 4\}, \{5, 8\}, \{2, 6\}, \{7, 8\}, \{6, 9\}, \{4, 8\}\} \\ \hline
		26 & 6 & 5 & \{\{0, 1\}, \{2, 3\}, \{2, 5\}, \{3, 10\}, \{5, 9\}, \{2, 4\}, \{0, 6\}, \{0, 8\}, \{7, 9\}, \{8, 10\}\} \\ \hline
		27 & 7 & 5 & \{\{0, 7\}, \{2, 6\}, \{0, 2\}, \{3, 8\}, \{4, 9\}, \{1, 10\}, \{7, 9\}, \{4, 5\}, \{8, 9\}, \{2, 10\}\} \\ \hline
		28 & 0 & 7 & \{\{1, 7\}, \{0, 9\}, \{3, 10\}, \{1, 10\}, \{2, 5\}, \{6, 9\}, \{5, 7\}, \{7, 8\}, \{8, 9\}, \{4, 10\}\} \\ \hline
		29 & 1 & 7 & \{\{2, 3\}, \{7, 8\}, \{1, 8\}, \{0, 2\}, \{4, 7\}, \{3, 5\}, \{5, 10\}, \{6, 7\}, \{0, 8\}, \{4, 9\}\} \\ \hline
		30 & 4 & 7 & \{\{0, 4\}, \{2, 3\}, \{2, 7\}, \{4, 10\}, \{5, 6\}, \{2, 6\}, \{7, 9\}, \{5, 8\}, \{1, 9\}, \{4, 6\}\} \\ \hline
		31 & 5 & 7 & \{\{0, 5\}, \{0, 8\}, \{3, 4\}, \{0, 4\}, \{5, 7\}, \{2, 4\}, \{7, 10\}, \{6, 8\}, \{1, 5\}, \{9, 10\}\} \\ \hline
		32 & 6 & 7 & \{\{0, 1\}, \{1, 2\}, \{1, 5\}, \{3, 5\}, \{1, 4\}, \{5, 8\}, \{1, 10\}, \{0, 7\}, \{0, 6\}, \{3, 9\}\} \\ \hline
		33 & 7 & 7 & \{\{0, 7\}, \{1, 3\}, \{2, 10\}, \{2, 4\}, \{1, 8\}, \{5, 9\}, \{5, 6\}, \{1, 7\}, \{6, 10\}, \{4, 8\}\} \\ \hline
		34 & 8 & 7 & \{\{3, 6\}, \{1, 4\}, \{0, 3\}, \{3, 7\}, \{3, 4\}, \{4, 9\}, \{2, 7\}, \{8, 9\}, \{5, 9\}, \{8, 10\}\} \\ \hline
		35 & 9 & 7 & \{\{2, 8\}, \{2, 9\}, \{1, 2\}, \{4, 5\}, \{0, 5\}, \{6, 7\}, \{3, 7\}, \{4, 6\}, \{6, 9\}, \{2, 10\}\} \\ \hline
	\end{tabular}
\end{table}

\section{Conclusion}\label{conclusion}
In this paper, we present improved upper and lower bounds on the maximum size of tree-codes, especially $\Omega((c_\delta n)^{n-d})=A(n,d)=O((C_\delta n)^{n-d})$, where $d=\delta n$ with $\delta\in(0,1)$, and constants $c_\delta\in (0,1)$ and $C_\delta\in (1/2,1]$. We also present several explicit constructions of codes with $n-d$ is some constant.  However, asymptotically determining the maximum size of codes for general $d$ is a bit challenging, even for determining the range of $d$ for which $A(n,d)=\Theta(n^2)$.



\section*{Declarations}

\textbf{{Conflict of interest}} The authors have no conflicts of interest to declare that are relevant to the content of this paper.

 \textbf{Funding} This work is supported by the National Key Research and
		Development Programs of China 2020YFA0713100 and 2023YFA1010200, the NSFC
		under Grants No. 12171452 and No. 12231014, and the Innovation Program for Quantum Science and Technology 2021ZD0302902.

 \textbf{Data availability} This article does not have any associated data.



\end{document}